\newcommand\bC{{\mathbb C}}
\newcommand\bG{{\mathbb G}}
\newcommand\bP{{\mathbb P}}
\newcommand\bR{{\mathbb R}}
\newcommand\cC{{\mathcal C}}
\newcommand\cE{{\mathcal E}}
\newcommand\cI{{\mathcal I}}
\newcommand\cL{{\mathcal L}}
\newcommand\cM{{\mathcal M}}
\newcommand\cN{{\mathcal N}}
\newcommand\cO{{\mathcal O}}
\newcommand\fg{\mathfrak{g}}
\newcommand\fl{\mathfrak{l}}
\newcommand\charc{{\rm char}}
\newcommand\diag{{\rm diag}}
\newcommand\id{{\rm id}}
\newcommand\op{{\rm op}}
\newcommand\red{{\rm red}}
\newcommand\Aff{{\rm Aff}}
\newcommand\Aut{{\rm Aut}}
\newcommand\Der{{\rm Der}}
\newcommand\End{{\rm End}}
\newcommand\Ext{{\rm Ext}}
\newcommand\GL{{\rm GL}}
\newcommand\Hilb{{\rm Hilb}}
\newcommand\Hom{{\rm Hom}}
\newcommand\Kern{{\rm Ker}}
\newcommand\Lie{{\rm Lie}}
\newcommand\PGL{{\rm PGL}}
\newcommand\Sec{{\rm Sec}}
\newcommand\Spec{{\rm Spec}}
\newcommand\Spf{{\rm Spf}}
\begin{document}

\title*{On automorphisms and endomorphisms of projective varieties}

\author{Michel Brion}

\institute{Michel Brion 
\at Institut Fourier, Universit\'e de Grenoble\\
B.P.~74, 38402 Saint-Martin d'H\`eres Cedex, France\\ 
\email{Michel.Brion@ujf-grenoble.fr}}

\maketitle


 
\begin{abstract}
\, We first show that any connected algebraic group over a perfect 
field is the neutral component of the automorphism group scheme 
of some normal projective variety. Then we show that very few connected 
algebraic semigroups can be realized as endomorphisms of some 
projective variety $X$, by describing the structure of all connected 
subsemigroup schemes of End($X$).
\end{abstract}
\keywords{automorphism group scheme, endomorphism semigroup scheme\\
MSC classes: 14J50, 14L30, 20M20}

\section{Introduction and statement of the results}
\label{sec:int}

By a result of Winkelmann (see \cite{Winkelmann}), every connected real 
Lie group $G$ can be realized as the automorphism group of some complex 
Stein manifold $X$, which may be chosen complete, and hyperbolic in the
sense of Kobayashi. Subsequently, Kan showed in \cite{Kan} that we may 
further assume $\dim_{\bC}(X) = \dim_{\bR}(G)$. 

We shall obtain a somewhat similar result for connected algebraic 
groups. We first introduce some notation and conventions, and 
recall general results on automorphism group schemes.

Throughout this article, we consider schemes and their morphisms 
over a fixed field $k$. Schemes are assumed to be separated; 
subschemes are locally closed unless mentioned otherwise. 
By a \emph{point} of a scheme $S$, we mean a $T$-valued point 
$f : T \to S$ for some scheme $T$. A \emph{variety} is 
a geometrically integral scheme of finite type.

We shall use \cite{SGA3} as a general reference for group schemes.
We denote by $e_G$ the neutral element of a group scheme $G$, and 
by $G^o$ the neutral component. An \emph{algebraic group} is 
a smooth group scheme of finite type.

Given a projective scheme $X$, the functor of automorphisms,
\[ T \longmapsto \Aut_T(X \times T), \] 
is represented by a group scheme, locally of finite type, 
that we denote by $\Aut(X)$. The Lie algebra of $\Aut(X)$ 
is identified with the Lie algebra of global vector fields,
$\Der(\cO_X)$ (these results hold more generally for projective
schemes over an arbitrary base, see \cite[p.~268]{Grothendieck};
they also hold for proper schemes of finite type over a field, 
see \cite[Thm.~3.7]{Matsumura-Oort}). In particular, the 
neutral component, $\Aut^o(X)$, is a group scheme of finite type;
when $k$ is perfect, the reduced subscheme, $\Aut^o(X)_{\red}$, 
is a connected algebraic group. As a consequence, $\Aut^o(X)$
is a connected algebraic group if $\charc(k) = 0$, since every 
group scheme of finite type is reduced under that assumption.
Yet $\Aut^o(X)$ is not necessarily reduced in prime 
characteristics (see e.g.~the examples in 
\cite[\S 4]{Matsumura-Oort}).

We may now state our first result:

\begin{theorem}\label{thm:aut}
Let $G$ be a connected algebraic group, and $n$ its dimension.

\smallskip

\noindent
If $\charc(k) = 0$, then there exists a smooth projective 
variety $X$ such that $\Aut^o(X) \cong G$ and $\dim(X) = 2 n$.

\smallskip

\noindent
If $\charc(k) > 0$ and $k$ is perfect, then there exists 
a normal projective variety $X$ such that 
$\Aut^0_{\red}(X) \cong G$ and $\dim(X) = 2 n$ 
(resp. $\Aut^o(X) \cong G$ and $\dim(X) = 2 n + 2)$.
\end{theorem}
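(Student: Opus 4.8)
The plan is to realize $G$ not as $\Aut^o$ of an equivariant compactification of $G$ itself — this already fails, since $\Aut^o(\bP^1)=\PGL_2$ contains $\bG_a$ and $\bG_m$ properly — but as $\Aut^o$ of a suitably twisted family of compactifications of $G$ over a rigid base. First I would assemble the ingredients. Let $Y$ be a normal projective \emph{bi-equivariant} compactification of $G$, i.e.\ a normal projective variety with a $G\times G$-action having a dense orbit equivariantly isomorphic to $G$ acted on by left and right translations (this exists by equivariant completion; when $\charc(k)=0$ one may moreover take $Y$ smooth by equivariant resolution of singularities, which leaves the dense orbit untouched). Then $\dim Y=n$, the right-translation action of $G$ on $Y$ is faithful, and restriction to the dense orbit embeds $\fm:=\Der(\cO_Y)$ into the space of vector fields on $G$; viewed as a module for the left-translation $G$-action on $Y$, the space of $G$-invariants of $\fm$ is exactly the $n$-dimensional space of left-invariant vector fields on $G$, a trivial $G$-submodule which coincides with the image of $\Lie(G)$ acting via right translations. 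Second, choose a smooth projective variety $B$ of dimension $n$ with $\Aut^o(B)=1$, say a product of $n$ curves of genus $\ge 2$; then $H^0(B,T_B)=0$, and $B$ carries many $G$-torsors (through $\Pic B$, $H^1(B,\cO_B)$, torsors under the Albanese of $G$, and the Chevalley decomposition of $G$).

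Given a $G$-torsor $P\to B$, set $X:=P\times^{G}Y$, the associated bundle for the left-translation action; it is a normal (smooth if $\charc(k)=0$) projective variety of dimension $\dim B+\dim Y=2n$, the right-translation action of $G$ on $Y$ descends to a faithful $G$-action on $X$, and $\pi\colon X\to B$ is a locally trivial fibration with fibre $Y$. Thus $G$ embeds as a closed subgroup of $\Aut^o(X)$, so $\dim\Aut^o(X)\ge n$; the heart of the matter is to choose $P$ so that this is an equality. Since $\pi_*\cO_X=\cO_B$ and $H^0(B,T_B)=0$, the relative tangent sequence gives
\[
H^0(X,T_X)=H^0(X,T_{X/B})=H^0\!\bigl(B,\;P\times^{G}\fm\bigr).
\]
Now $\fm$ has a $G$-stable filtration with bottom piece the trivial submodule $\Lie(G)$, and for a sufficiently general (non-trivial) torsor $P$ the twisted bundle $P\times^{G}(\fm/\Lie(G))$ has no non-zero global sections — this is the Atiyah-bundle phenomenon, where the non-triviality of $P$ destroys the "constant'' sections coming even from the invariant subquotients — so that $H^0(X,T_X)=\Lie(G)$ is $n$-dimensional. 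Then $\dim\Aut^o(X)=n=\dim\Lie\Aut^o(X)$, which forces $\Aut^o(X)$ to be smooth; being connected and containing $G$ as a closed subgroup of full dimension, $\Aut^o(X)=G$. This disposes of the characteristic-zero case.

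In characteristic $p>0$ with $k$ perfect the same construction gives a normal projective $X$ of dimension $2n$, and the above computation run with $\Aut^o_{\red}$ (legitimate since $k$ is perfect) yields $\Aut^o_{\red}(X)\cong G$. However $\fm=\Der(\cO_Y)$ may now be strictly larger, reflecting the non-reducedness of $\Aut^o(Y)$ or singularities of $Y$, and its extra terms feed infinitesimal automorphisms into $\Aut^o(X)$ that no twist removes; so $\Aut^o(X)$ need not be reduced with this construction. To obtain $\Aut^o(X)\cong G$ as a group scheme one replaces the fibre $Y$ by a smooth projective $G$-variety having a free dense $G$-orbit and with $\Der(\cO_\bullet)$ under control — built inside a projective bundle, which in general costs two extra dimensions — and re-runs Steps 2–3, producing a normal projective $X$ with $\Aut^o(X)\cong G$ and $\dim X=2n+2$.

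The principal difficulty is the vanishing $H^0\!\bigl(B,\,P\times^{G}(\fm/\Lie(G))\bigr)=0$: one must produce, uniformly in the connected algebraic group $G$, a base $B$ and a $G$-torsor $P$ for which it holds, and this requires a careful analysis of $\Der(\cO_Y)$ as a $G$-module together with which $G$-bundles over $B$ carry no sections. Closely tied to this — and the source of the extra two dimensions in characteristic $p$ — is controlling the infinitesimal part of $\Aut^o(X)$, i.e.\ the global vector fields created by singularities or by non-reduced automorphism group schemes of the fibre.
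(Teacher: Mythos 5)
Your construction (twisted $Y$-fibration $X=P\times^G Y$ over a rigid base $B$, with the torsor chosen so that the only global vector fields are the right-translation fields) is a genuinely different route from the paper, which instead blows up $Y\times Y$ along the union of the diagonal and the graphs of a finite subset $F\subset G(\bar k)$ that topologically generates $G$ (resp.\ cuts out the centralizer in prime characteristic), and then identifies $\Aut^o$ of the blow-up with the connected stabilizer of that union of graphs. But your argument has a genuine gap at exactly the step you flag as the ``principal difficulty'': the claim that for a sufficiently general torsor $P$ one has $H^0\bigl(B,\,P\times^G(\fm/\Lie(G))\bigr)=0$ is not only unproved, it is false as stated whenever the left $G$-module $\fm/\Lie(G)$ has nonzero $G$-invariants, which happens already in the simplest non-reductive case. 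Take $G=\bG_a$ acting on $Y=\bP^1$ by $x\mapsto x+t$: then $\fm=\Der(\cO_{\bP^1})=\langle \partial_x,\,x\partial_x,\,x^2\partial_x\rangle$ is a single indecomposable module, and the image of $x\partial_x$ is $G$-invariant in $\fm/\Lie(G)$; hence $P\times^G(\fm/\Lie(G))$ contains $\cO_B$ as a subsheaf and has nonzero sections for \emph{every} torsor $P$. To salvage the computation you would have to work with the long exact sequence of the filtration and prove that the relevant connecting homomorphisms (cup products with extension classes built from the torsor class) are injective, uniformly in $G$ --- including the semisimple pieces, where one needs principal bundles on $B$ whose associated bundles in all the nontrivial irreducible constituents of $\fm$ have no sections, and the unipotent and anti-affine pieces, where the module $\fm$ is far from semisimple and is not under explicit control for a general bi-equivariant compactification $Y$. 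None of this is supplied, so the heart of the proof is missing; by contrast, the paper avoids any such cohomological vanishing by rigidifying with graphs of finitely many translations and using that a connected automorphism group must stabilize each irreducible component of the image of the exceptional divisor (this is also why it needs $n\geq 2$ there and treats $n=1$ by hand, a case split your proposal does not address).

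The positive-characteristic part is also not a proof. Granting the (unproved) vanishing, the bound $\dim\Aut^o_{\red}(X)\leq\dim H^0(X,T_X)$ would indeed give $\Aut^o_{\red}(X)\cong G$ over a perfect field, but your route to $\Aut^o(X)\cong G$ as a group scheme (``replace the fibre by a $G$-variety built inside a projective bundle, which costs two extra dimensions'') is only a gesture: the actual difficulty is to show that \emph{all} of $\Der(\cO_X)$ comes from $\Lie(G)$, and in the paper this is done by an explicit argument with the Jacobian ideal of the blow-up, which works only when $p\nmid n-1$; the dimension $2n+2$ in the theorem arises precisely from replacing $Y$ by $Y\times C$ (a rigid curve) to shift the codimension of the centers when $p\mid n-1$, not from any projective-bundle construction. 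Your sketch neither identifies the infinitesimal automorphisms to be killed nor explains how the extra two dimensions kill them, and in characteristic $p$ you cannot even take $Y$ smooth, so the relative tangent sequence you rely on for the key identification $H^0(X,T_X)=H^0(B,P\times^G\fm)$ is no longer available in the form you use it.
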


This result is proved in Section \ref{sec:aut}, first in the case 
where $\charc(k) = 0$; then we adapt the arguments to the case 
of prime characteristics, which is technically more involved 
due to group schemes issues. We rely on fundamental results about 
the structure and actions of algebraic groups over an algebraically 
closed field, for which we refer to the recent exposition \cite{BSU}.

Theorem \ref{thm:aut} leaves open many basic questions about
automorphism group schemes. For instance, can one realize 
every connected algebraic group over an arbitrary field 
(or more generally, every connected group scheme of finite type) 
as the full automorphism group scheme of a normal projective variety? 
Also, very little seems to be known about the group of components, 
$\Aut(X)/\Aut^o(X)$, where $X$ is a projective variety. In 
particular, the question of the finite generation of this group 
is open, already when $X$ is a complex projective manifold.  

As a consequence of Theorem \ref{thm:aut}, we obtain
the following characterization of Lie algebras of vector fields:

\begin{corollary}\label{cor:der}
Let $\fg$ be a finite-dimensional Lie algebra over a field $k$ 
of characteristic $0$. Then the following conditions are equivalent:

\begin{enumerate}

\item[{\rm (i)}]{$\fg \cong \Der(\cO_X)$ for some proper scheme 
$X$ of finite type.}

\item[{\rm (ii)}]{ $\fg$ is the Lie algebra of a linear algebraic
group.}

\end{enumerate}

Under either condition, $X$ may be chosen projective, smooth, 
and unirational of dimension $2n$, where $n := \dim(\fg)$.
If $k$ is algebraically closed, then we may further choose $X$
rational. 
\end{corollary}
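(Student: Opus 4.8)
The plan is to derive Corollary \ref{cor:der} from Theorem \ref{thm:aut} together with the identification of the Lie algebra of $\Aut(X)$ with $\Der(\cO_X)$, which is quoted in the introduction for proper schemes of finite type over a field. The implication (i) $\Rightarrow$ (ii) is the subtler direction, and the implication (ii) $\Rightarrow$ (i) is essentially a restatement of Theorem \ref{thm:aut} once one adds the refinement that $X$ can be taken unirational (resp. rational over $\kb$). I will treat the two directions separately.

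For (ii) $\Rightarrow$ (i): suppose $\fg = \Lie(H)$ for a linear algebraic group $H$. Replacing $H$ by its neutral component $H^o$ (which has the same Lie algebra), we may assume $H$ is connected. Since $\charc(k) = 0$, $H^o$ is a connected algebraic group, so Theorem \ref{thm:aut} applies: there is a smooth projective variety $X$ with $\Aut^o(X) \cong H^o$ and $\dim(X) = 2n$. Then $\Der(\cO_X) = \Lie(\Aut(X)) = \Lie(\Aut^o(X)) \cong \Lie(H^o) = \fg$, giving (i). The claims that $X$ is unirational (resp. rational when $k = \kb$) of dimension $2n$ should come for free from inspecting the construction used to prove Theorem \ref{thm:aut} — presumably $X$ is built as a compactification of (a torsor under, or a blow-up of a homogeneous space under) $H^o$, which is rational because a connected linear algebraic group in characteristic zero is a rational variety over $\kb$, and unirational over $k$; I would just point to that construction.

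For (i) $\Rightarrow$ (ii): suppose $\fg = \Der(\cO_X)$ for some proper scheme $X$ of finite type over $k$. Then $\fg = \Lie(\Aut(X)) = \Lie(\Aut^o(X))$, and since $\charc(k) = 0$, the group scheme $\Aut^o(X)$ is reduced, hence a connected algebraic group $G$ with $\Lie(G) = \fg$. So $\fg$ is the Lie algebra of an algebraic group; it remains to upgrade "algebraic group" to "linear algebraic group." This is where I expect the main work to lie. The point is that a connected algebraic group $G$ over a field of characteristic zero fits into a Chevalley exact sequence $1 \to L \to G \to A \to 1$ with $L$ connected linear and $A$ an abelian variety, and one has $\Lie(A) = \Lie(G)/\Lie(L)$. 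The abelian variety part is invisible at the level of Lie algebras in the sense that $\Lie(A)$, as an abstract Lie algebra, is abelian; one then replaces $G$ by the connected linear algebraic group $L \times \bG_a^{\dim A}$ (or $L \times T$ for a torus $T$ of dimension $\dim A$), whose Lie algebra is $\Lie(L) \oplus k^{\dim A} \cong \Lie(G) = \fg$ as Lie algebras, because the extension of $\Lie(L)$ by the abelian Lie algebra $\Lie(A)$ splits — one has to check that $\Lie(L)$ is an ideal and that the quotient acts trivially, i.e. that the adjoint action of $G$ on $\Lie(G)$ restricts to the trivial action on $\Lie(A)$, equivalently that $A$ is central at the Lie-algebra level, which holds since $A$ is commutative and the conjugation action of $G$ on the normal subgroup... here one must be slightly careful and instead argue directly on the Lie algebra. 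The cleanest route: $\mathfrak{l} := \Lie(L)$ is an ideal of $\fg$ with abelian quotient, so $[\fg,\fg] \subseteq \mathfrak{l}$; since $\mathfrak{l}$ is algebraic linear (being the Lie algebra of the linear group $L$), and any Lie algebra which is an extension of an abelian Lie algebra by $\mathfrak{l}$ is $\Lie$ of $L \times \bG_a^m$ after a possible modification — I would check that the relevant $\Ext$ or the splitting works, or more simply observe that $\fg$ itself, being the Lie algebra of $G$, has a Levi decomposition $\fg = \fg_{\rm red} \ltimes \fn$ and its radical and nilradical are unchanged by passing from $G$ to a linear group with the same reductive part and the same solvable radical dimension.

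**The main obstacle**, then, is purely the (i) $\Rightarrow$ (ii) direction: showing that the abelian-variety factor of $\Aut^o(X)$ does not enlarge the class of Lie algebras obtained, i.e. that every Lie algebra arising as $\Lie$ of a (possibly non-affine) connected algebraic group in characteristic zero already arises as $\Lie$ of a connected \emph{linear} algebraic group. Everything else — reducedness of $\Aut^o(X)$ in characteristic zero, the identification $\Lie(\Aut(X)) = \Der(\cO_X)$, and the dimension/rationality bookkeeping — is either quoted in the excerpt or immediate from the construction behind Theorem \ref{thm:aut}. I would organize the writeup as: (1) recall $\Der(\cO_X) = \Lie(\Aut^o(X))$ and reducedness; (2) prove the Lie-algebra lemma that $\Lie$ of an algebraic group is $\Lie$ of a linear one (via Chevalley's structure theorem and a splitting of the abelian quotient); (3) deduce (i) $\Leftrightarrow$ (ii); (4) read off the geometric refinements from the proof of Theorem \ref{thm:aut}.
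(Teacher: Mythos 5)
Your (ii)$\Rightarrow$(i) direction matches the paper: invoke Theorem \ref{thm:aut} (and, for the unirationality/rationality claims, the refinement recorded in Remark \ref{rem:rat}, which is exactly the ``inspection of the construction'' you allude to), then use $\Der(\cO_X) = \Lie\,\Aut^o(X)$.

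The gap is in (i)$\Rightarrow$(ii), precisely at the step you flag as uncertain. After writing $\fg = \Lie(G)$ for $G = \Aut^o(X)$ and invoking Chevalley's exact sequence $1 \to L \to G \to A \to 1$, you propose to conclude because $\fg$ is an extension of the abelian Lie algebra $\Lie(A)$ by the algebraic Lie algebra $\fl = \Lie(L)$, asserting that such an extension is (up to modification) $\Lie(L \times \bG_a^m)$, or that some Ext/Levi argument will split it. That assertion is false in the stated generality: take $\fg$ with basis $x,y,z$, $[x,y]=y$, $[x,z]=\lambda z$, $[y,z]=0$, with $\lambda \notin \bQ$. Here $[\fg,\fg] = \langle y,z\rangle$ is abelian, so $\fg$ is an extension of the abelian Lie algebra $k$ by the abelian (hence algebraic) ideal $\langle y,z\rangle$, yet $\fg$ is not the Lie algebra of any linear algebraic group (its image under $\ad$ is not an algebraic subalgebra of $\fg\fl(\fg)$, by Chevalley's criterion). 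So ``abelian quotient plus algebraic kernel'' is not enough; you need a complement to $\fl$ that actually \emph{commutes} with $\fl$, and nothing in your sketch produces one. The Levi-decomposition remark at the end is too vague to close this.

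The paper closes exactly this hole with a group-theoretic input you do not use: over $\bar{k}$ one has $G_{\bar{k}} = Z(G)_{\bar{k}} \, L_{\bar{k}}$ (Rosenlicht's decomposition, \cite[Prop.~3.1.1]{BSU}), whence $\fg = \Lie(Z(G)) + \Lie(L)$. One may then choose the vector-space complement $V$ of $\Lie(L)$ \emph{inside} $\Lie(Z(G))$, so that $[V,V] = 0 = [V,\Lie(L)]$ automatically, giving $\fg \cong \Lie(U \times L)$ with $U$ the commutative unipotent group with Lie algebra $V$. An alternative repair, closer in spirit to what you quote, is to apply Chevalley's criterion directly: $\ad(\fg) = \Lie(\mathrm{Ad}(G))$ is the Lie algebra of the linear algebraic group $\mathrm{Ad}(G) \subset \GL(\fg)$, hence $\fg$ is algebraic. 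Either way, a concrete ingredient beyond ``the quotient is abelian'' is required, and your proposal does not supply it.
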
  

This result is proved in Subsection \ref{subsec:der}. 
The Lie algebras of linear algebraic groups over a field of 
characteristic $0$ are called algebraic Lie algebras; they have been 
characterized by Chevalley in \cite{Chevalley51, Chevalley55}. 
More specifically, a finite-dimensional Lie algebra $\fg$ is algebraic 
if and only if its image under the adjoint representation is an 
algebraic Lie subalgebra of $\fg\fl(\fg)$ (see 
\cite[Chap.~V, \S 5, Prop.~3] {Chevalley55}). Moreover, 
the algebraic Lie subalgebras of $\fg \fl (V)$, where $V$ is 
a finite-dimensional vector space, are characterized in 
\cite[Chap.~II, \S 14]{Chevalley51}. Also, recall a result of 
Hochschild (see \cite{Hochschild}): the isomorphism classes of 
algebraic Lie algebras are in bijective correspondence with the 
isomorphism classes of connected linear algebraic groups with 
unipotent center.

In characteristic $p > 0$, one should rather consider restricted 
Lie algebras, also known as $p$-Lie algebras. In this setting, 
characterizing Lie algebras of vector fields seems to be an open 
question. This is related to the question of characterizing
automorphism group schemes, via the identification of restricted 
Lie algebras with infinitesimal group schemes of height $\leq 1$ 
(see \cite[Exp.~VIIA, Thm.~7.4]{SGA3}).

\medskip

Next, we turn to the monoid schemes of endomorphisms of projective 
varieties; we shall describe their connected subsemigroup schemes. 
For this, we recall basic results on schemes of morphisms. 

Given two projective schemes $X$ and $Y$, the functor of morphisms,
\[ T \longmapsto \Hom_T(X \times T, Y \times T) 
\cong \Hom(X \times T,Y), \]
is represented by an open subscheme of the Hilbert scheme
$\Hilb(X \times Y)$, by assigning to each morphism its graph 
(see \cite[p.~268]{Grothendieck}, and \cite[\S 1.10]{Kollar96},
\cite[\S 4.6.6]{Sernesi} for more details). 
We denote that open subscheme by $\Hom(X,Y)$. The composition 
of morphisms yields a natural transformation of functors, and 
hence a morphism of schemes
\[ \Hom(X,Y) \times \Hom(Y,Z) \longrightarrow \Hom(X,Z), 
\quad (f,g) \longmapsto g f \] 
where $Z$ is another projective scheme.

As a consequence of these results, the functor of endomorphisms 
of a projective scheme $X$ is represented by a scheme, $\End(X)$; 
moreover, the composition of endomorphisms equips $\End(X)$ 
with a structure of monoid scheme with neutral element 
being of course the identity, $\id_X$. 
Each connected component of $\End(X)$ is of finite type, 
and these components form a countable set. 
The automorphism group scheme $\Aut(X)$ is open in $\End(X)$ 
by \cite[p.~267]{Grothendieck} (see also
\cite[Lem.~I.1.10.1]{Kollar96}). If $X$ is a variety, 
then $\Aut(X)$ is also closed in $\End(X)$, as follows from 
\cite[Lem.~4.4.4]{Brion12}; thus, $\Aut(X)$ is a union of 
connected components of $\End(X)$. In particular, $\Aut^o(X)$ 
is the connected component of $\id_X$ in $\End(X)$.

As another consequence, given a morphism $f : X \to Y$ 
of projective schemes, the functor of sections of $f$ 
is represented by a scheme that we shall denote by $\Sec(f)$: 
the fiber at $\id_Y$ of the morphism
\[ \lambda_f : \Hom(Y,X) \longrightarrow \End(Y), 
\quad g \longmapsto f g. \]
Every section of $f$ is a closed immersion;
moreover, $\Sec(f)$ is identified with an open subscheme 
of $\Hilb(X)$ by assigning to each section its image 
(see \cite[p.~268]{Grothendieck} again; our notation differs 
from the one used there). Given a section $s \in \Sec(f)(k)$,
we may identify $Y$ with the closed subscheme $Z := s(Y)$;
then $f$ is identified with a \emph{retraction} of $X$ 
onto that subscheme, i.e., to a morphism $r : X \to Z$ such that
$r i = \id_Z$, where $i : Z \to X$ denotes the inclusion.
Moreover, the endomorphism $e := ir$ of $X$ is
\emph{idempotent}, i.e., satisfies $e^2 = e$.

Conversely, every idempotent $k$-rational point of $\End(X)$
can be written uniquely as $e = i r$, where $i : Y \to X$
is the inclusion of the image of $e$ (which coincides
with its fixed point subscheme), and $r: X \to Y$
is a retraction. When $X$ is a variety, $Y$ is
a projective variety as well. We now analyze the connected component
of $e$ in $\End(X)$:

\begin{proposition}\label{prop:end}
Let $X$ be a projective variety, $e \in \End(X)(k)$ an idempotent,
and $C$ the connected component of $e$ in $\End(X)$.
Write $e = i r$, where $i : Y \to X$ denotes the inclusion of 
a closed subvariety, and $r: X \to Y$ is a retraction.

\begin{enumerate}

\item[{\rm (i)}]{The morphism
\[ \rho_r : \Hom(Y,X) \longrightarrow \End(X),
\quad f \longmapsto f r \]
restricts to an isomorphism from the connected component of
$i$ in $\Hom(Y,X)$, to $C$. Moreover, $C$ is a subsemigroup scheme 
of $\End(X)$, and $f = f e$ for any $f \in C$.}

\item[{\rm (ii)}]{The morphism 
\[ \lambda_i \rho_r : \End(Y) \longrightarrow \End(X),
\quad f \longmapsto i f r \]
restricts to an isomorphism of semigroup schemes
$\Aut^o(Y) \stackrel{\cong}{\longrightarrow} eC$.
In particular, $eC$ is a group scheme with neutral
element $e$.} 

\item[{\rm (iii)}]{$\rho_r$ restricts to an isomorphism 
from the connected component of $i$ in $\Sec(r)$, 
to the subscheme $E(C)$ of idempotents in $C$. Moreover, 
$f_1 f_2 = f_1$ for all $f_1,f_2 \in E(C)$; in particular,
$E(C)$ is a closed subsemigroup scheme of $C$.}

\item[{\rm (iv)}]{The morphism 
\[ \varphi : E(C) \times eC \longrightarrow C, \quad
(f,g) \longmapsto f g \] 
is an isomorphism of semigroup schemes, where the
semigroup law on the left-hand side is given by
$(f_1,g_1) \cdot (f_2,g_2) = (f_1,g_1g_2)$.}
\end{enumerate}

\end{proposition}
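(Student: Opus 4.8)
The plan is to establish the four parts in order, using the graph/Hilbert-scheme picture throughout and exploiting that $r i = \id_Y$, $e = ir$, $e^2 = e$. For (i), I would first observe that $\rho_r(f) = fr$ sends $i \mapsto ir = e$, so $\rho_r$ maps the connected component of $i$ in $\Hom(Y,X)$ into $C$. To produce an inverse, note that for $g \in \End(X)$ near $e$ one has $g i : Y \to X$, and the assignment $g \mapsto gi$ is a morphism $\End(X) \to \Hom(Y,X)$ sending $e \mapsto ei = iri = i$; I would check $\rho_r(gi) = gir = ge$ and, conversely, $(fr)i = f(ri) = f$. So the two maps are mutually inverse \emph{provided} $ge = g$ for all $g \in C$. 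This last identity is the crux: it says the whole component $C$ is annihilated on the right by $e$. I would prove it by a connectedness/rigidity argument — the map $g \mapsto ge$ is a morphism $C \to C$ (composition is a morphism of schemes, and $ge$ lies in the component of $e\cdot e = e$), it fixes $e$, and it is idempotent as an endomorphism of $C$ (since $e^2 = e$); one then argues it must be the identity on the connected component of $e$, e.g.\ because its image is a connected subscheme containing $e$ on which it acts as the identity, and a retraction of a connected scheme onto a subscheme through a fixed point… here I would instead use that $\rho_r \circ (g \mapsto gi)$ and $(g \mapsto gi)\circ \rho_r$ are identities on the respective components where that can be checked directly (the second composite is genuinely $f \mapsto fri = f$, no hypothesis needed), so $g \mapsto gi$ is a closed immersion of $C$ into $\Hom(Y,X)$ with image the component of $i$; then $\rho_r$ restricted to that component is its inverse, and $ge = \rho_r(gi) = g$ follows a posteriori. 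That $C$ is a subsemigroup: for $f_1, f_2 \in C$, $f_1 f_2$ lies in the component of $e \cdot e = e$, hence in $C$.

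For (ii), I would compose: $\lambda_i \rho_r(f) = ifr$ sends $\id_Y \mapsto i\,\id_Y\,r = ir = e$, and lands in $eC$ because $ifr = (ir)(fr)\cdot$ — more cleanly, $ifr = e\cdot(ifr)$ since $e = ir$ and $rif = f$ gives $e(ifr) = i(rif)r = ifr$, and on the other side $(ifr)e = ifre$; using (i)'s identity $ge = g$ with $g = ifr \in C$ we get $(ifr)e = ifr$, so the image is a two-sided-$e$-fixed piece, i.e.\ a subsemigroup of $eC$. Restricting to $\Aut^o(Y)$ (the component of $\id_Y$ in $\End(Y)$), I claim $\lambda_i\rho_r$ is an isomorphism onto $eC$ with inverse $g \mapsto rgi$ (which sends $e = ir \mapsto r(ir)i = (ri)(ri) = \id_Y$): indeed $r(ifr)i = (ri)f(ri) = f$, and conversely $i(rgi)r = (ir)g(ir) = ege = g$ for $g \in eC$ — here $eg = g$ since $g = eh$, and $ge = g$ by (i). Multiplicativity of the two maps is a direct check, so $eC \cong \Aut^o(Y)$ as semigroup schemes, hence $eC$ is a group scheme with neutral element $e$.

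For (iii), an idempotent $f \in C$ satisfies $f = f e$ by (i), so writing $f = \rho_r(s)$ with $s = fi \in \Hom(Y,X)$, the condition $f^2 = f$ becomes $sr\,sr = sr$, i.e.\ $s(rs)r = sr$; since $rs = rfi = r(fe)i = rf(ir)i$… I would instead argue directly that $f$ idempotent in $C$ corresponds under the isomorphism of (i) to $s$ with $rs = \id_Y$, i.e.\ $s \in \Sec(r)$: from $f^2 = f$ and $f = sr$, compose with $i$ on the right to get $fri\cdot$ — cleaner: $\rho_r$ is an iso of schemes onto $C$ (part (i)), so it identifies idempotents with elements $s$ of the component of $i$ in $\Hom(Y,X)$ satisfying $(sr)^2 = sr$, equivalently $s(rs)r = sr$; precomposing with $i$ and using $ri = \id$ twice yields $srs = s$ then $rs\cdot rs = rs$ and finally, applying (i)'s relation, $rs = \id_Y$, which says exactly $s \in \Sec(r)$. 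The relation $f_1 f_2 = f_1$ for idempotents: $f_1 f_2 = f_1(f_2 e) = f_1 f_2 e$ and $f_2 = i r_2$-type decomposition with image containing that of $e$; more simply, under (i) write $f_j = s_j r$ with $r s_j = \id_Y$, so $f_1 f_2 = s_1 r s_2 r = s_1 (rs_2) r = s_1 \id_Y r = s_1 r = f_1$. Closedness of $E(C)$ in $C$ follows since it is the component of $i$ in $\Sec(r)$, which is closed in the corresponding component of $\Hom(Y,X) \cong C$ (sections form a closed condition, being the fiber over $\id_Y$).

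For (iv), define $\varphi(f,g) = fg$ on $E(C) \times eC$. To see it is an isomorphism I would exhibit the inverse $h \mapsto (he, \,eh)$ — wait, the natural candidate is $h \mapsto (h e,\, e h)$ does not obviously land in $E(C)$; instead use $h \mapsto (h\,(\text{idempotent part}),\, \dots)$. Concretely: given $h \in C$, set $g := eh \in eC$ and $f := h g^{-1}$ where $g^{-1}$ is the inverse in the group $eC$ from (ii); then $f$ should be the idempotent with $fg = h$. I would verify $f \in E(C)$ by a short computation ($f^2 = hg^{-1}hg^{-1} = h g^{-1}$ using that $g^{-1}h = g^{-1}(eh)\cdot$… since $eh = g$ one gets $g^{-1}h$ acts as $g^{-1}g = e$ on the relevant piece), and that $(f,g) \mapsto fg = h$. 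Multiplicativity: $\varphi((f_1,g_1)\cdot(f_2,g_2)) = \varphi(f_1, g_1g_2) = f_1 g_1 g_2$, while $\varphi(f_1,g_1)\varphi(f_2,g_2) = f_1 g_1 f_2 g_2$; these agree iff $g_1 f_2 = f_2 g_1$-type rearrangement, i.e.\ we need $g_1 f_2 g_2 = g_1 g_2$… this follows because $g_1 \in eC$ means $g_1 = e g_1$, and $e f_2 = e$ (since $f_2 \in E(C)$ has image containing that of $e$, so $e f_2 = e$ — this is the key identity $ef = e$ for $f \in E(C)$, which I would extract from $f = s r$ with $rs = \id$: $ef = ir\cdot sr = i(rs)r = ir = e$). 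Hence $g_1 f_2 = e g_1 f_2 = \dots = g_1 e f_2$-route gives $g_1 f_2 = g_1$ after using $f_2 e = f_2$ and $e f_2 = e$ appropriately, making $\varphi$ multiplicative.

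The main obstacle is the identity $ge = g$ for all $g$ in the connected component $C$ — everything in (i)–(iv) reduces to clean semigroup manipulations once that, together with its companions $ef = e$ and $fe = f$ for idempotents, is in hand. I expect to prove $ge = g$ not by a direct computation but by the scheme-theoretic observation that $g \mapsto gi$ is a \emph{closed immersion} $C \hookrightarrow \Hom(Y,X)$ (its composite with $\rho_r$ in one order is the identity $f \mapsto fri = f$ on the target component, needing no hypothesis) whose image is the connected component of $i$; then $\rho_r$ on that component is a two-sided inverse, forcing $g = \rho_r(gi) = gir = ge$. Handling this at the level of schemes (not just $k$-points) — i.e.\ checking the relevant morphisms of functors compose to the identity — is the only genuinely delicate point; the representability and finite-type statements needed are already recalled in the excerpt.
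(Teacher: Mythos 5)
Your reductions of (ii)--(iv) to semigroup identities are essentially the paper's own computations, and your derivations of $ef=e$ for idempotents and of the inverse $h \mapsto (h(eh)^{-1}, eh)$ in (iv) match the intended argument. But there is a genuine gap exactly at the point you yourself identify as the crux: the identity $ge = g$ for all $g \in C$, equivalently the surjectivity of $\rho_r$ from the component $D$ of $i$ in $\Hom(Y,X)$ onto $C$. Your formal argument does not deliver it. Writing $\sigma : g \mapsto gi$, the only composite you can check unconditionally is $\sigma \circ \rho_r = \id_D$ (since $fri = f$). This makes $\rho_r : D \to C$ a split monomorphism and $\sigma : C \to D$ a split epimorphism --- note your claim is backwards: it is $\rho_r$, not $\sigma$, that is forced to be a closed immersion --- and such a retraction datum never formally implies $\rho_r \circ \sigma = \id_C$: abstractly $C$ could be $D \times Z$ with $\sigma$ the projection and $\rho_r$ the inclusion over a point, which is compatible with everything you have used. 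Likewise the ``idempotent endomorphism of a connected scheme fixing a point must be the identity'' idea fails ($\bA^1$ retracts onto the origin), and in a general monoid scheme the connected component of an idempotent certainly need not satisfy $ge=g$ (e.g.\ $e=0$ in the multiplicative monoid $\bA^1$). So some genuinely geometric input about projective varieties is indispensable here: one must show that every endomorphism in the connected component of $e$ contracts the fibres of $r$ and hence factors through $r$.

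That input is precisely what the paper uses: after reducing to $k=\bar k$ via geometric connectedness of $C$ and of $D$ (\cite[Exp.~VIB, Lem.~2.1.2]{SGA3}, using the $k$-point $e$), the first assertion of (i) is quoted from the rigidity lemma in the form \cite[Prop.~4.4.2, Rem.~4.4.3]{Brion12}; the identity $f = fe$ then follows because every $f \in C$ is of the form $gr$, so $fe = grir = gr = f$. If you supply this rigidity step (or reprove that version of the rigidity lemma), the rest of your outline goes through along the same lines as the paper; without it, part (i) --- and hence everything downstream that uses $ge=g$ --- is unproved. Two smaller points to tighten once (i) is in place: in (ii) you should note that $rgi$ lies in the connected component of $\id_Y$ in $\End(Y)$, which equals $\Aut^o(Y)$ because $\Aut(Y)$ is open and closed in $\End(Y)$ for the projective variety $Y$ (as recalled in the introduction of the paper); and in (iv) the verification that $h(eh)^{-1}$ is idempotent should be written out using $(eh)^{-1}(eh) = e$ and $g e = g$ for $g \in C$, as in the paper, rather than left as an ellipsis.
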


This is proved in Subsection \ref{subsec:end}, by using 
a version of the rigidity lemma (see \cite[\S 4.4]{Brion12}).
As a straightforward consequence, the maximal connected 
subgroup schemes of $\End(X)$ are exactly the 
$\lambda_i \rho_r( \Aut^o(Y))$ with the above notation
(this fact is easily be checked directly).

As another consequence of Proposition \ref{prop:end}, 
the endomorphism scheme of a projective variety can have
everywhere nonreduced connected components, 
even in characteristic $0$. 
Consider for example a ruled surface 
\[ r : X = \bP(\cE) \longrightarrow Y,\] 
where $Y$ is an elliptic curve and $\cE$ is a locally free 
sheaf on $Y$ which belongs to a nonsplit exact sequence 
\[ 0 \longrightarrow \cO_Y \longrightarrow \cE 
\longrightarrow \cO_Y \longrightarrow 0 \]
(such a sequence exists in view of the isomorphisms
$\Ext^1(\cO_Y, \cO_Y) \cong H^1(Y,\cO_Y) \cong k$). 
Let $i : Y \to X$ be the section associated with the projection
$\cE \to \cO_Y$. Then the image of $i$ yields 
an isolated point of $\Hilb(X)$ with Zariski tangent space 
of dimension $1$ (see e.g. \cite[Ex.~4.6.7]{Sernesi}). 
Thus, the connected component of $i$ in $\Sec(r)$ 
is a nonreduced fat point. By Proposition \ref{prop:end}
(iv), the connected component of $e := ir$ in $\End(X)$ 
is isomorphic to the product of that fat point 
with $\Aut^o(Y) \cong Y$, and hence is nonreduced everywhere. 
This explains a posteriori why we have to be so fussy 
with semigroup schemes.

A further consequence of Proposition \ref{prop:end}
is the following:

\begin{proposition}\label{prop:semi}
Let $X$ be a projective variety, $S$ a connected 
subsemigroup scheme of $\End(X)$, and $E(S) \subset S$
the closed subscheme of idempotents. Assume that $S$ has 
a $k$-rational point.

\begin{enumerate}

\item[{\rm (i)}]{$E(S)$ is a connected subsemigroup scheme 
of $S$, with semigroup law given by $f_1 f_2 = f_1$. Moreover, 
$E(S)$ has a $k$-rational point.}

\item[{\rm (ii)}]{For any $e \in E(S)(k)$, the closed 
subsemigroup scheme $e S \subset S$ is a group scheme. 
Moreover, the morphism 
\[ \varphi : E(S) \times eS  \longrightarrow S, \quad
(f,g) \longmapsto f g \]
is an isomorphism of semigroup schemes.}

\item[{\rm (iii)}]{Identifying $S$ with $E(S) \times eS$ 
via $\varphi$, the projection $\pi : S \to E(S)$ is the 
unique retraction of semigroup schemes from $S$ to $E(S)$.
In particular, $\pi$ is independent of the choice of the
$k$-rational idempotent $e$.}

\end{enumerate}

\end{proposition}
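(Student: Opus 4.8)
The plan is to deduce everything from Proposition \ref{prop:end}. First I would fix a $k$-rational point $f_0 \in S(k)$ and show that $S$ contains an idempotent. Since each connected component of $\End(X)$ is of finite type and $S$ is connected, $S$ is contained in a single component $C$ of $\End(X)$; the powers $f_0, f_0^2, f_0^3, \dots$ all lie in $S$, and by finiteness of the connected component together with the usual semigroup argument (the sequence of Zariski closures of $\{f_0^m : m \geq N\}$ stabilizes, or: in the finite-type scheme $C$ the subsemigroup generated by $f_0$ has a well-defined ``kernel'') one gets that some power $e := f_0^m$ is idempotent after possibly passing to $f_0^{ab}$ for suitable $a,b$. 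Concretely, $e \in E(S)(k)$, so $E(S)$ has a $k$-rational point, and $e$ lies in the connected component $C$ of $\End(X)$ in the sense of Proposition \ref{prop:end}. Write $e = ir$ with $i : Y \to X$ a closed immersion and $r : X \to Y$ a retraction.

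Next I would invoke Proposition \ref{prop:end}(iv): the connected component $C$ of $e$ in $\End(X)$ is isomorphic, as a semigroup scheme, to $E(C) \times eC$ via $\varphi(f,g) = fg$, where the law on $E(C)$ is $f_1 f_2 = f_1$ and $eC \cong \Aut^o(Y)$ is a group scheme with neutral element $e$. Since $S \subseteq C$ is a connected subsemigroup scheme containing $e$, I want to show $S$ corresponds under $\varphi$ to a product $E(S) \times eS$. The point is that $e$ acts on $C$ from both sides: for $f \in C$ we have $f = fe$ (Proposition \ref{prop:end}(i)), and $ef$ lands in $eC$. So the projection $C \to E(C)$ is $f \mapsto f(ef)^{-1}_{eC}$, or more simply, using that $\varphi$ is an isomorphism, every $f \in C$ is uniquely $f = \epsilon(f) \cdot \gamma(f)$ with $\epsilon(f) = f e \in E(C)$... wait — one must check $fe \in E(C)$: indeed $(fe)(fe) = f(ef)e$ and $ef \in eC$ has the form $ef = e \cdot g$ so $(fe)(fe) = fe \cdot g \cdot e$; rather than belabor this I would simply transport the statement through $\varphi$. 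Setting $E(C) = E(C) \times \{e\}$ and $eC = \{e\} \times eC$ inside $C = E(C) \times eC$, a point is idempotent iff its $eC$-coordinate is $e$; hence $E(S) = S \cap E(C)$, which is exactly the image of $S$ under the projection $\pi : C \to E(C)$ composed with the inclusion, provided $S = \pi(S) \times \sigma(S)$ where $\sigma : C \to eC$ is the other projection. That product decomposition of $S$ follows because $\sigma$ restricted to $S$ is a semigroup homomorphism to the group $eC$ whose fibers over $S$ are translates, so $S = (S \cap E(C)) \cdot (S \cap eC)$; I would write this out via $f = (fe) \cdot (ef)$ for $f \in S$, noting $fe \in S \cap E(C) = E(S)$ and $ef \in S \cap eC = eS$, and that this product is preserved. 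This establishes (i) (with $E(S)$ a connected subsemigroup scheme, being the image of the connected $S$ under a morphism, having the $k$-point $e$, and carrying the law $f_1 f_2 = f_1$), (ii) (for $e \in E(S)(k)$, $eS = S \cap eC$ is a subgroup scheme of the group scheme $eC$, and $\varphi : E(S) \times eS \to S$ is the restriction of the isomorphism for $C$, hence an isomorphism of semigroup schemes — one checks the semigroup law on $E(S) \times eS$ is the claimed one).

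For part (iii), I would argue that $\pi : S \cong E(S) \times eS \to E(S)$ is a retraction of semigroup schemes: it is a morphism, it restricts to the identity on $E(S)$, and it is a semigroup homomorphism since the law on $E(S) \times eS$ is $(f_1,g_1)(f_2,g_2) = (f_1 f_2, g_1 g_2)$ (using $f_1 f_2 = f_1$ on $E(S)$), so $\pi((f_1,g_1)(f_2,g_2)) = f_1 f_2 = \pi(f_1,g_1)\pi(f_2,g_2)$. For uniqueness, suppose $\psi : S \to E(S)$ is any semigroup-scheme retraction. On $k$-points first: for $(f,g) \in S(k)$ write $(f,g) = (f,e)(e,g)$; then $\psi(f,g) = \psi(f,e)\psi(e,g)$. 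Now $(e,g) = (e,g)^n \cdot (e,g^{-n})$ hmm — better: $(e,g)$ lies in the group $eS$ with neutral element $e = (e,e)$, and $\psi$ maps $eS$ to the subsemigroup $E(S)$ whose only law is $f_1f_2 = f_1$; but a group homomorphic image inside such a left-zero semigroup must be a single point, and since $\psi(e) = e$ we get $\psi|_{eS} \equiv e$. Meanwhile $\psi|_{E(S)} = \id$, so $\psi(f,g) = \psi(f,e)\psi(e,g) = f \cdot e = f = \pi(f,g)$. To promote this from $k$-points to scheme morphisms I would run the same computation functorially over an arbitrary base $T$, or note that $S$ is a $k$-scheme of finite type and the two morphisms $\psi,\pi : S \to E(S)$ agree on a scheme-theoretically dense subset once one checks the argument works over all $T$-points — in fact the functorial version is cleanest, since every step above is an identity of $T$-valued points that holds for all $T$. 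Independence of $e$ is then immediate, as $\pi$ has been characterized without reference to $e$.

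The main obstacle I anticipate is the first step — producing a $k$-rational idempotent in $S$ from a $k$-rational point — and, relatedly, being careful that all the product decompositions and the ``left-zero semigroup kills group quotients'' argument are carried out functorially (over every base $T$) rather than merely on $k$-points, since $E(S)$ and $S$ may be nonreduced (as the ruled-surface example shows). Once the idempotent $e$ is in hand and one has set up the identification $S \cong E(S) \times eS$ inside $C \cong E(C) \times eC$, parts (i)--(iii) are bookkeeping with the semigroup laws.
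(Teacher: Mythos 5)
Your overall route is the paper's: reduce to Proposition \ref{prop:end}, produce a $k$-rational idempotent $e$ from powers of a $k$-point, decompose $S\cong E(S)\times eS$, and get uniqueness of the retraction from the left-zero law on $E(S)$. But the two steps you wave at are exactly where the real content lies, and as written both are gaps. First, the idempotent: your claim that some power $e=f_0^m$ is idempotent is false in general. Take $X$ an abelian variety and $S=\Aut^o(X)\cong X$ acting by translations, $f_0$ translation by a non-torsion $k$-point: no positive power of $f_0$ is idempotent, while $E(S)=\{\id_X\}$. What the ``kernel'' argument yields is an idempotent in the \emph{closure} $\langle f_0\rangle$ of the set of powers, not a power itself; and over a field $k$ that is not algebraically closed, the existence of a $k$-rational idempotent in that closed commutative subsemigroup is a genuine theorem — the paper invokes the main result of \cite{Brion-Renner} (algebraic semigroups are strongly $\pi$-regular) applied to the reduced commutative closed subsemigroup $\langle f_0\rangle\subset S$. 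You correctly flag this as the main obstacle, but the sketch you give does not close it.

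Second, you assert without proof that $eS=S\cap eC$ is a subgroup scheme of the group scheme $eC$. A priori $eS$ is only a closed submonoid scheme of $eC$, and submonoids of groups need not be groups ($\bN\subset\bZ$); the fact that a subsemigroup scheme of a group scheme of finite type is automatically a closed subgroup scheme is precisely the new ingredient of the paper, Lemma \ref{lem:sub}, which rests on Lemma \ref{lem:unit} (the invertibles of a monoid scheme of finite type form an open subgroup scheme, proved by a formal-completion argument) together with \cite[Prop.~2.2.5]{Brion12} for the reduced part. Without this you cannot invert $ef$ inside $S$, and the product decomposition of $S$ collapses. Relatedly, your explicit splitting $f=(fe)(ef)$ is incorrect: by Proposition \ref{prop:end}(i) one has $fe=f$ for every $f\in C$, so $fe$ is not idempotent and $(fe)(ef)=f^2\neq f$ in general; the correct decomposition, as in the paper, is $f=f(ef)^{-1}\cdot ef$, whose idempotent factor lies in $S$ only because $(ef)^{-1}\in eS$ — again Lemma \ref{lem:sub}. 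Once these two points are supplied, your part (i) and your uniqueness argument in (iii) (a semigroup homomorphism from the group $eS$ to the left-zero semigroup $E(S)$ is constant, run functorially on $T$-points) do agree with the paper's proof.
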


This structure result is proved in Subsection 
\ref{subsec:semi}; a new ingredient is the fact that 
a subsemigroup scheme of a group scheme of finite type 
is a subgroup scheme (Lemma \ref{lem:unit}). The case 
where $S$ has no $k$-rational point is discussed in Remark 
\ref{rem:nopoint} at the end of Subsection \ref{subsec:semi}.

Proposition \ref{prop:semi} yields strong restrictions on
the structure of connected subsemigroup schemes of $\End(X)$, 
where $X$ is a projective variety. For example, if such 
a subsemigroup scheme is commutative or has a neutral element, 
then it is just a group scheme.

As another application of that proposition, we shall
show that the dynamics of an endomorphism of $X$ which 
belongs to some algebraic subsemigroup is very restricted. 
To formulate our result, we need the following:

\begin{definition}\label{def:end}
Let $X$ be a projective variety, and $f$ a $k$-rational
endomorphism of $X$. 

We say that $f$ is \emph{bounded}, if $f$ belongs to 
a subsemigroup of finite type of $\End(X)$. Equivalently, 
the powers $f^n$, where $n \geq 1$, are all contained in 
a finite union of subvarieties of $\End(X)$.

We say that a $k$-rational point $x \in X$ is \emph{periodic}, 
if $x$ is fixed by some $f^n$. 
\end{definition}

\begin{proposition}\label{prop:bound}
Let $f$ be a bounded endomorphism of a projective variety $X$. 

\begin{enumerate}
\item[{\rm (i)}]{There exists a smallest closed algebraic subgroup 
$G \subset \End(X)$ such that $f^n \in G$ for all $n \gg 0$. 
Moreover, $G$ is commutative.}
\item[{\rm (ii)}]{When $k$ is algebraically closed, $f$ has 
a periodic point if and only if $G$ is linear. If $X$ is normal, 
this is equivalent to the assertion that some positive power 
$f^n$ acts on the Albanese variety of $X$ via an idempotent.}
\end{enumerate}

\end{proposition}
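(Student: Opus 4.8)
The plan is to leverage Proposition~\ref{prop:semi} applied to the closure of the cyclic semigroup generated by $f$, combined with the structure theory of commutative algebraic groups (the Chevalley decomposition) to control the dynamics.

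\medskip

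\noindent\textbf{Setup and proof of (i).}
Since $f$ is bounded, the powers $f^n$, $n \geq 1$, lie in a subsemigroup of finite type, hence in finitely many connected components of $\End(X)$. Let $S$ be the Zariski closure in $\End(X)$ of the cyclic subsemigroup $\{f^n : n \geq 1\}$; then $S$ is a commutative subsemigroup scheme of finite type, with a $k$-rational point. Its connected component $S^o$ of a suitable idempotent is a connected commutative subsemigroup scheme, and by the pigeonhole principle some power $f^{n_0}$ lies in $S^o$, so after replacing $f$ by $f^{n_0}$ we may assume $f \in S^o$ and that $S$ itself is connected (we keep track of the original $f$ at the end). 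By Proposition~\ref{prop:semi}, $S^o \cong E(S^o) \times eS^o$ with $eS^o$ a commutative group scheme; since $S^o$ is commutative, the semigroup law $f_1 f_2 = f_1$ on $E(S^o)$ forces $E(S^o)$ to be a single (reduced) point, the idempotent $e$. Hence $S^o = eS^o$ is a commutative group scheme of finite type, and all sufficiently high powers of $f$ lie in it. To get the \emph{smallest} such $G$: among all closed algebraic subgroups of $\End(X)$ containing $f^n$ for all $n \gg 0$, the intersection of any two again contains these powers for $n$ large, and by the descending chain condition on closed subschemes of a finite-type scheme there is a smallest one $G$; since $G$ is contained in the commutative group scheme $S^o_{\red}$ (or $S^o$ itself), $G$ is commutative. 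One should note $G$ is defined using the original $f$, not its power, but since $\langle f^n : n \gg 0 \rangle$ and $\langle (f^{n_0})^m : m \gg 0 \rangle$ generate the same closed subgroup, no information is lost.

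\medskip

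\noindent\textbf{Proof of (ii).}
Now assume $k = \bar{k}$. If $f$ has a periodic point $x$, then $f^n$ fixes $x$ for some $n$, so every element of the group $G$ (being a limit of powers $f^{mn}$, which also fix $x$) fixes $x$; thus $G$ acts on $X$ with a fixed point. A connected algebraic group acting on a projective variety with a fixed point, and whose action is faithful --- here $G \subset \End(X)$ acts faithfully by definition --- must be linear, because an abelian variety acting on any variety with a fixed point acts trivially (its orbits are complete and connected, hence points). More precisely, write the Chevalley decomposition $1 \to G_{\aff} \to G \to A \to 1$ with $A$ an abelian variety; the $A$-part acts through complete orbits through the fixed point $x$, hence trivially, so $A$ embeds trivially and $G = G_{\aff}$ is linear. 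Conversely, if $G$ is linear, then $G$ acts on the projective variety $X$ and by Borel's fixed point theorem a Borel subgroup --- hence, $G$ being commutative, all of $G$ --- has a fixed point $x \in X(k)$; then $f^n(x) = x$ for $n \gg 0$, so $x$ is periodic. For the last assertion, assume $X$ normal and consider the Albanese morphism $a : X \to \Alb(X)$; functoriality gives a homomorphism $\End(X) \to \End(\Alb(X))$ lying over $a$, under which $f$ maps to some endomorphism $\alpha$ of the abelian variety $\Alb(X)$ (an element of $\End(\Alb(X))$ as a group, up to translation; one must be slightly careful that $a$ is only defined up to translation, but the induced map on $\Alb(X)$ as an abelian \emph{variety} endomorphism-plus-translation is canonical). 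The image of $G$ under this homomorphism is a commutative algebraic subgroup $\bar{G}$ of the (commutative, proper) group scheme of endomorphisms-plus-translations of $\Alb(X)$; $G$ is linear if and only if $\bar{G}$ is trivial, equivalently $f^n$ acts on $\Alb(X)$ via a translation for $n \gg 0$ --- but the linear part of $\langle f^n \rangle$ dies, so the semisimple-plus-unipotent linear-algebraic dynamics collapses, and one checks this collapse is exactly the statement that some power $f^n$ acts on $\Alb(X)$ through an idempotent endomorphism. One uses here that the image of $f$ in $\End(\Alb(X))$ is again bounded, so by part (i) its high powers lie in a commutative group $\bar{G}$; $\bar{G}$ linear $\iff$ $\bar{G}$ trivial (a linear algebraic subgroup of an abelian variety is trivial, but here we are inside a group built from $\End(\Alb(X))$ which is an extension of a discrete-ish part by $\Alb(X)$; the linear part is the ``multiplication'' part and triviality of it is equivalent to $f^n$ being, up to translation, an idempotent).

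\medskip

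\noindent\textbf{Main obstacle.}
The delicate point is the translation ambiguity in the Albanese construction and making precise the claim ``$G$ linear $\iff$ some $f^n$ acts on $\Alb(X)$ via an idempotent.'' One must set up the correct target group scheme --- the semidirect product $\End_{grp}(\Alb(X)) \ltimes \Alb(X)$ of group endomorphisms with translations --- show $f$ induces a well-defined element there, show the induced homomorphism $G \to \End_{grp}(\Alb(X)) \ltimes \Alb(X)$ has kernel contained in the linear radical of $G$ (using that the Albanese map is ``injective on the abelian-variety part'', i.e. $G^{\mathrm{ab}} \to \Alb(X)$-data is faithful by the universal property applied to the action), and finally translate ``the image of a high power is a translation'' into ``$f^n$ acts via an idempotent'' --- here an idempotent group-endomorphism $\varepsilon$ of $\Alb(X)$ composed with a translation, when iterated, stabilizes, and conversely a bounded sequence $\alpha^n$ in $\End_{grp}$ with $\alpha^n \to$ translation forces $\alpha^{n}$ idempotent for large $n$ by the structure of $\End(\Alb(X))$ as an order in a semisimple algebra. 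I would handle this by reducing first to the case $X = \Alb(X)$ (where the statement becomes purely about abelian-variety endomorphisms) and invoking that a bounded endomorphism of an abelian variety has high powers lying in a finite commutative group, whose linearity is equivalent to that group being finite, equivalent to $\alpha$ being, up to translation, an idempotent on the connected component.
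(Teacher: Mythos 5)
Your overall strategy (close up the powers of $f$, apply Proposition~\ref{prop:semi}, then use commutative group theory) is the paper's, but the execution of (i) has a genuine gap: the reduction ``replace $f$ by $f^{n_0}$'' discards exactly the information the proposition is about. The group you construct, $S^o=eS^o$, contains only the powers $f^{n_0m}$, not all $f^n$ for $n\gg 0$, and your repair --- that $\langle f^n : n\gg 0\rangle$ and $\langle f^{n_0m} : m\gg 0\rangle$ generate the same closed subgroup --- is false: take $f$ a nontrivial automorphism of finite order $d$ (e.g.\ $x\mapsto -x$ on $\bP^1$). Every residue class mod $d$ occurs among large exponents, so the smallest closed subgroup containing all $f^n$, $n\gg 0$, is the cyclic group of order $d$ generated by $f$, while the high powers of $f^{n_0}=f^d=\id$ generate the trivial group and $S^o=\{\id\}$. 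Consequently your DCC/intersection argument has no exhibited member of the family (the only candidate produced, $S^o$, does not qualify), and your commutativity claim rests on the false inclusion $G\subset S^o$. The missing idea is the paper's: note $f^n=ef^n$ for $n\geq n_0$, take $G=e\langle f\rangle=\langle ef\rangle$ (a commutative, in general \emph{disconnected}, algebraic group containing all high powers of $f$), and prove minimality by showing any closed subgroup $H$ containing $f^n$ for $n\geq n_1$ contains $ef=f^{n_1+1}\,(f^{n_1})^{-1}$, hence $G$. This containment argument also sidesteps the fact that in positive characteristic an intersection of smooth subgroups need not be smooth, so ``smallest algebraic subgroup'' cannot simply be produced by intersecting.

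In (ii) there are further gaps. First, $G$ does not act on $X$: its elements are non-invertible endomorphisms (its neutral element $e$ is a retraction), so ``$G$ acts faithfully on $X$'', the orbit argument, and Borel's fixed point theorem applied to $X$ are not available as stated; one must pass to the retract $Y:=e(X)$, on which $G$ does act by automorphisms, and to the point $y:=e(x)$ --- indeed if $f^n(x)=x$ then what the powers of $ef$ fix is $e(x)$, not $x$. Relatedly, ``every element of $G$ is a limit of powers $f^{mn}$'' is false (same finite-order example: $f\in G$ is not in the closure of the even powers); what is dense in $G$ are the powers of $ef$, and that density is what makes the finite-orbit argument run. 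In the converse direction, a disconnected commutative linear group need not have a fixed point, so you should apply Borel to $G^o$ acting on $Y$ and use that some power of $f$ lies in $G^o$. Finally, the Albanese equivalence --- which you yourself flag as the main obstacle --- is not proved, and your proposed criterion (image of $G$ trivial, or ``$f^n$ acts by a translation'') is not the right one: an $f$ of finite order acting nontrivially on $\Alb(X)$ gives a finite, hence linear, $G$ with nontrivial image. The paper instead shows $Y$ is normal (Lemma~\ref{lem:nor}), applies \cite[Thm.~2]{Brion10} to the $G^o$-action on the Albanese variety of $Y$ to get ``$G$ linear iff $G^o$ acts trivially there'', and identifies that Albanese variety with the direct summand of the Albanese variety of $X$ cut out by the idempotent induced by $e$, which is what converts linearity into ``some $f^n$ acts on the Albanese variety of $X$ via an idempotent''.
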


This result is proved in Subsection \ref{subsec:bound}. 
As a direct consequence, every bounded endomorphism of 
a normal projective variety $X$ has a periodic point, 
whenever the Albanese variety of $X$ is trivial (e.g., 
when $X$ is unirational); we do not know if any 
such endomorphism has a fixed point. In characteristic $0$,
it is known that every endomorphism (not necessarily bounded) 
of a smooth projective unirational variety $X$ has a fixed point:
this follows from the Woods Hole formula (see 
\cite[Thm.~2]{Atiyah-Bott}, \cite[Exp.~III, Cor.~6.12]{SGA5})
in view of the vanishing of $H^i(X,\cO_X)$ for all $i \geq 1$,
proved e.g. in \cite[Lem.~1]{Serre}.

Also, it would be interesting to extend the above results
to endomorphism schemes of complete varieties. In this setting,
the rigidity lemma of \cite[\S 4.4]{Brion12} still hold. Yet
the representability of the functor of morphisms by a scheme 
is unclear: the Hilbert functor of a complete variety is 
generally not represented by a scheme (see e.g. 
\cite[Ex.~5.5.1]{Kollar96}), but no such example seems to be
known for morphisms.

\section{Proofs of Theorem \ref{thm:aut}
and of Corollary \ref{cor:der}}
\label{sec:aut}

\subsection{Proof of Theorem \ref{thm:aut} in characteristic $0$}
\label{subsec:zero}
\smartqed

We begin by setting notation and recalling a standard result 
of Galois descent, for any perfect field $k$.

We fix an algebraic closure $\bar{k}$ of $k$, and denote by
$\Gamma$ the Galois group of $\bar{k}/k$. For any scheme $X$,
we denote by $X_{\bar{k}}$ the $\bar{k}$-scheme obtained from $X$ 
by the base change $\Spec(\bar{k}) \to \Spec(k)$. Then $X_{\bar{k}}$
is equipped with an action of $\Gamma$ such that the structure
map $X_{\bar{k}} \to \Spec(\bar{k})$ is equivariant; moreover,
the natural morphism $X_{\bar{k}} \to X$ may be viewed as 
the quotient by this action. The assignement 
$Y \mapsto Y_{\bar{k}}$ defines a bijective correspondence 
between closed subschemes of $X$ and $\Gamma$-stable closed 
subschemes of $X_{\bar{k}}$.

Next, recall Chevalley's structure theorem: every connected
algebraic group $G$ has a largest closed connected linear normal 
subgroup $L$, and the quotient $G/L$ is an abelian variety 
(see e.g. \cite[Thm.~1.1.1]{BSU} when $k = \bar{k}$; the general 
case follows by the above result of Galois descent). 

We shall also need the existence of a normal projective
equivariant compactification of $G$, in the following sense:

\begin{lemma}\label{lem:comp}
There exists a normal projective variety $Y$ equipped with an action 
of $G \times G$ and containing an open orbit isomorphic to $G$,
where $G \times G$ acts on $G$ by left and right multiplication. 
\end{lemma}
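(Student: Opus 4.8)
The plan is to build $Y$ as a normal projective $G\times G$-equivariant compactification of the homogeneous space $(G\times G)/\Delta(G)\cong G$, where $\Delta(G)\subset G\times G$ is the diagonal, acting on $G$ by $(g_1,g_2)\cdot g=g_1 g g_2^{-1}$. First I would reduce to the case $k=\bar k$: by the Galois descent recalled above, a $\Gamma$-stable construction over $\bar k$ descends to $k$, so it suffices to produce, functorially (hence $\Gamma$-equivariantly), such a compactification over $\bar k$. Over $\bar k$ one may invoke the general existence theorem for normal projective equivariant compactifications of homogeneous spaces under a connected algebraic group: embed $G\hookrightarrow (G\times G)/\Delta(G)$ $G\times G$-equivariantly into the projectivization of a finite-dimensional $G\times G$-module (possible since $G$, being quasi-projective with a linearizable line bundle after Chevalley, admits such an embedding — one can also use the regular representation on a suitable ample linearized bundle on a projective compactification), take the closure, and then normalize. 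Normalization is again functorial, hence $G\times G$-equivariant and $\Gamma$-stable, so the whole construction descends.

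The key steps, in order, are: (1) invoke Chevalley's structure theorem to get that $G$ is quasi-projective and carries an ample line bundle that is $G\times G$-linearizable (here using that $G\times G$ is connected, so after replacing the bundle by a positive power it is linearizable in the sense of Mumford); (2) use this linearization to get a locally closed $G\times G$-equivariant immersion $G\hookrightarrow\bP(V)$ for some finite-dimensional $G\times G$-module $V$; (3) let $\bar Y$ be the Zariski closure of the image, a projective $G\times G$-variety containing $G$ as an open orbit; (4) let $Y\to\bar Y$ be the normalization, which is finite and $G\times G$-equivariant, and note $Y$ is still projective, still contains a copy of $G$ as an open orbit (normalization is an isomorphism over the smooth locus, and $G$ is smooth), and is geometrically integral hence a variety; (5) check $\Gamma$-stability of every step and descend to $k$.

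The main obstacle I expect is step (1)–(2): arranging a genuinely $G\times G$-equivariant (not merely $G$-equivariant) projective embedding. The subtlety is that $G\times G$ acts on $G$ with the diagonal as stabilizer, and linearizing an ample bundle over all of $G\times G$ may require passing to a power of the bundle and possibly to a finite cover of the acting group; one must check that such adjustments can still be made compatibly with the desired open orbit and with the $\Gamma$-action. An alternative route that sidesteps part of this is to first take \emph{any} normal projective compactification $X$ of $G$ (which exists by general results), consider the natural rational action of $G\times G$ on $X$, and then resolve it to a morphism by taking the normalization of the closure of the graph of $G\times G\times G\dashrightarrow G\times G\times X$ inside $G\times G\times X'$ for a further projective compactification $X'$; the component dominating $G\times G$, suitably normalized, yields $Y$. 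Either way, once the equivariant projective model over $\bar k$ is in hand, normality and the descent to $k$ are routine.
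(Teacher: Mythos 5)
There is a genuine gap, and it sits exactly where you suspected: steps (1)--(2) of your main route are not just delicate, they are impossible whenever $G$ is not linear. If $G$ has a nontrivial abelian variety quotient (the case Chevalley's theorem is there to isolate), then $G\times G$ has a nontrivial anti-affine part, and every homomorphism from it to $\GL(V)$ or $\PGL(V)$ kills that part, since these groups are affine. Hence the $G\times G$-orbit of any point of $\bP(V)$, for $V$ a finite-dimensional $G\times G$-module, is a homogeneous space under an affine quotient of $G\times G$ and can never be isomorphic to $G$. Equivalently, your claim in step (1) that some positive power of an ample line bundle on $G$ is $G\times G$-linearizable fails already for $G$ an abelian variety: a line bundle on an abelian variety is translation-invariant exactly when it lies in $\Pic^0$, and no power of an ample bundle is. The linearization theorem you are implicitly invoking is a theorem about connected \emph{linear} algebraic groups. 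So your construction proves the lemma only when $G=L$ is linear --- and in that case it essentially coincides with the paper's argument, which embeds $L$ in $\PGL_{n+1}$ and takes the normalization of its closure in $\bP(M_{n+1})$.

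Your fallback (start from an arbitrary normal projective compactification, regularize the rational $G\times G$-action by graph closures) is not a repair but a restatement of the problem: turning a rational action into a regular action on a projective model is precisely the equivariant completion statement being proved, and the standard tools (Weil regularization plus Sumihiro-type embedding theorems) again require the acting group to be linear, or else substantial extra work --- this is the content of the cited result in \cite{BSU}. What the paper actually does in the general case is different and is the missing idea: write $G/L$ as an abelian variety via Chevalley, take a normal projective $L\times L$-equivariant compactification $Z$ of $L$, form the associated bundle $Y=G\times^L Z\to G/L$ (projective because $Z$ carries an ample $L\times L$-linearized bundle), and then extend the left $G$-action on $Y$ to a $G\times G$-action using the decomposition $G_{\bar k}=Z(G)_{\bar k}L_{\bar k}$, which yields $G/L\cong (G\times G)/(L\times L)\diag(Z(G))$ and $G\times^L Z\cong (G\times G)\times^{(L\times L)\diag(Z(G))}Z$. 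Without an argument of this kind (or an explicit appeal to a nonlinear equivariant completion theorem), your proposal does not establish the lemma beyond the linear case; the Galois-descent bookkeeping is indeed the routine part.
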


\begin{proof}
\smartqed
When $k$ is algebraically closed, this statement is 
\cite[Prop.~3.1.1 (iv)]{BSU}. For an arbitrary $k$, we adapt 
the argument of [loc.~cit.]. 

If $G = L$ is linear, then we may identify it to a closed subgroup 
of some $\GL_n$, and hence of $\PGL_{n+1}$. The latter group has an 
equivariant compactification by the projectivization, $\bP(M_{n+1})$, 
of the space of matrices of size $(n+1) \times (n+1)$, where 
$\PGL_{n+1} \times \PGL_{n+1}$ acts via the action of 
$\GL_{n+1} \times \GL_{n+1}$ on $M_{n+1}$ by left and right matrix
multiplication. Thus, we may take for $Y$ the normalization of 
the closure of $L$ in $\bP(M_{n+1})$.

In the general case, choose a normal projective equivariant
compactification $Z$ of $L$ and let 
\[ Y := G \times^L Z \longrightarrow G/L \] 
be the fiber bundle associated with the principal $L$-bundle
$G \to G/L$ and with the $L$-variety $Z$, where $L$ acts 
on the left. Then $Y$ is a normal projective variety, since
so is $L$ and hence $L$ has an ample $L \times L$-linearized
line bundle. Moreover, $Y$ is equipped with a $G$-action
having an open orbit, $G \times^L L \cong G$. 

We now extend this $G$-action to an action of $G \times G$,
where the open $G$-orbit is identified to the 
$G \times G$-homogeneous space $(G \times G)/\diag(G)$,
and the original $G$-action, to the action of $G \times e_G$. 
For this, consider the scheme-theoretic center $Z(G)$ 
(resp.~$Z(L)$) of $G$ (resp.~$L$). Then $Z(L) = Z(G) \cap L$, 
since $Z(L)_{\bar{k}} = Z(G)_{\bar{k}} \cap L_{\bar{k}}$ in view 
of \cite[Prop.~3.1.1 (ii)]{BSU}. Moreover, 
$G_{\bar{k}} = Z(G)_{\bar{k}} L_{\bar{k}}$ by [loc.~cit.]; hence
the natural map $Z(G)/Z(L) \to G/L$ is an isomorphism of
group schemes. It follows that $G/L$ is isomorphic to 
\[ (Z(G) \times Z(G))/(Z(L) \times Z(L)) \diag(Z(G)) 
\cong (G \times G)/(L \times L) \diag(Z(G)). \]
Moreover, the $L \times L$-action on $Z$ extends to an action
of $(L \times L) \diag(Z(G))$, where $Z(G)$ acts trivially:
indeed, $(L \times L) \diag(Z(G))$ is isomorphic to 
\[ (L \times L \times Z(G))/(L \times L) \cap \diag(Z(G)) 
= (L \times L \times Z(G))/\diag(Z(L)), \]
and the subgroup scheme $\diag(Z(L)) \subset L \times L$ acts
trivially on $Z$ by construction. This yields an isomorphism
\[ G \times^L Z \cong 
(G \times G)\times^{(L \times L) \diag(Z(G))} Z, \]
which provides the desired action of $G \times G$.
\qed
\end{proof}

From now on in this subsection, we assume that $\charc(k) = 0$. 
We shall construct the desired variety $X$ from the equivariant 
compactification $Y$, by proving a succession of lemmas.

Denote by $\Aut^G(Y)$ the subgroup scheme of $\Aut(Y)$ consisting
of automorphisms which commute with the left $G$-action. Then 
the right $G$-action on $Y$ yields a homomorphism
\[ \varphi : G \longrightarrow \Aut^G(Y). \] 

\begin{lemma}\label{lem:autg}
With the above notation, $\varphi$ is an isomorphism.
\end{lemma}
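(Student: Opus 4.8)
The plan is to reduce the statement to the case $k = \bar{k}$ by Galois descent, and then to exploit the open orbit $G \subset Y$ to pin down $\Aut^G(Y)$ completely. First I would observe that $\varphi$ is injective: if $g \in G(T)$ acts trivially on $Y_T$, then in particular it acts trivially on the open orbit $G \subset Y$, on which right translation by $g$ is a free action, so $g = e_G$. Since both source and target are group schemes of finite type, it remains to show $\varphi$ is surjective, and for this — $\Aut^G(Y)$ being locally of finite type — it suffices to identify $k$-points after base change to $\bar{k}$ together with tangent spaces at the identity, or more simply to show $\varphi_{\bar k}$ is an isomorphism and then descend. So assume $k = \bar{k}$.

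The key step is the following: any $\psi \in \Aut^G(Y)(k)$ preserves the open $G$-orbit $U \cong G$, because $U$ is characterized intrinsically as the unique open $G$-orbit (the complement $Y \setminus U$ is the union of all orbits of strictly smaller dimension, hence is $\Aut^G(Y)$-stable). On $U \cong G$, a $G$-equivariant automorphism for the left translation action is exactly right translation by some element: indeed if $\psi(e_G) = h$, then $\psi(g) = \psi(g \cdot e_G) = g \cdot \psi(e_G) = g h$ for all $g$, so $\psi|_U = \rho_h$, the right translation by $h = \varphi(h)|_U$. Since $U$ is dense in the variety $Y$ and $Y$ is separated, an automorphism of $Y$ is determined by its restriction to $U$; hence $\psi = \varphi(h)$. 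This handles $k$-points. For the scheme structure, I would run the same argument with $T$-valued points, or alternatively compare Lie algebras: $\Lie \Aut^G(Y) = \Der(\cO_Y)^G$, the global $G$-invariant vector fields, and restriction to $U \cong G$ identifies these with the right-invariant vector fields on $G$, which is exactly $\Lie(G)$ mapping isomorphically under $d\varphi$; combined with bijectivity on $\bar k$-points and the fact that in characteristic $0$ everything is reduced, this gives that $\varphi$ is an isomorphism.

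The main obstacle I anticipate is the passage from $k$-points (or $\bar k$-points) to an isomorphism of group schemes — i.e. checking that $\varphi$ is an isomorphism of schemes, not merely a bijection on points. In characteristic $0$ this is soft, since smoothness plus bijectivity on $\bar k$-points plus injectivity on tangent spaces suffices; the invariant-vector-field computation above supplies the tangent space statement. Concretely: $\varphi$ is a homomorphism of algebraic groups (over $k = \bar k$, all of finite type and smooth in characteristic $0$) which is bijective on $\bar k$-points and induces an isomorphism on Lie algebras via the identification $\Der(\cO_Y)^G \cong \Lie(G)$ coming from restriction to the open orbit; hence $\varphi$ is an isomorphism. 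Descending back to the original perfect $k$ via the Galois-descent dictionary recalled at the start of the subsection completes the proof.
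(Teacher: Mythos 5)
Your proposal is correct, and its key idea is the same as the paper's: an automorphism commuting with the left $G$-action must preserve the open orbit, and a left-$G$-equivariant automorphism of $G$ is exactly right translation by some element, so restriction to the orbit inverts $\varphi$. The difference is in execution. The paper turns this observation directly into an inverse homomorphism of group schemes $\Aut^G(Y) \to \Aut^G(G) \cong G$ (restriction to the stable open orbit, at the functor level), so it needs no reduction to $\bar{k}$-points, no Lie-algebra computation, and no appeal to characteristic-$0$ reducedness; your route instead checks bijectivity on $\bar{k}$-points, injectivity (in fact bijectivity) on Lie algebras via invariant vector fields, and then uses smoothness in characteristic $0$ together with Galois descent. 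Your heavier version is essentially the argument the paper itself runs in the prime-characteristic case (Subsection \ref{subsec:prime}), where $\Aut^G(Y)$ is not a priori reduced and the claim is verified exactly by combining bijectivity on $\bar{k}$-points with the computation $\Der^G(\cO_Y) \hookrightarrow \Der^G(\cO_G) \cong \Lie(G)$. One small step worth making explicit in your Lie-algebra argument: restriction of invariant vector fields from $Y$ to the open orbit is a priori only injective; that it hits all of $\Lie(G)$ follows because the composite $\Lie(G) \to \Der^G(\cO_Y) \to \Der^G(\cO_G) \cong \Lie(G)$ induced by $\Lie(\varphi)$ and restriction is the identity, which is how the paper phrases it.
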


\begin{proof}
\smartqed
Note that $\Aut^G(Y)$ stabilizes the open orbit for the left 
$G$-action, and this orbit is isomorphic to $G$. This yields 
a homomorphism $\Aut^G(Y) \to \Aut^G(G)$. Moreover, 
$\Aut^G(G) \cong G$ via the action of $G$ on itself by right 
multiplication, and the resulting homomorphism 
$\psi : \Aut^G(Y) \to G$ is readily seen to be inverse of 
$\varphi$.
\qed
\end{proof}

\begin{lemma}\label{lem:gen}
There exists a finite subset $F \subset G(\bar{k})$ 
which generates a dense subgroup of $G_{\bar{k}}$.
\end{lemma}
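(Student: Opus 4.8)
The plan is to split $G_{\bar k}$ into a linear part and an abelian part by Chevalley's structure theorem, produce a finitely generated Zariski-dense subgroup of each part separately, and then assemble them. Applying that theorem over $\bar k$, we obtain an exact sequence $1 \to L \to G_{\bar k} \to A \to 1$ with $L$ a connected linear algebraic group and $A$ an abelian variety; write $\pi : G_{\bar k} \to A$ for the projection. Two standard facts will be used repeatedly: the subgroup generated by finitely many closed connected subgroups of an algebraic group of finite type is again closed, and the image of a homomorphism of such groups is a closed subgroup.

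For the linear part, I would invoke the standard structure result (via a Levi decomposition, the generation of a connected reductive group by a maximal torus and its root subgroups, and the generation of a connected unipotent group in characteristic $0$ by the one-parameter subgroups $\exp(\bar k X)$ with $X$ running over a basis of its Lie algebra) that $L$ is generated by finitely many closed subgroups $H_1, \dots, H_r$, each isomorphic to $\bG_a$ or to $\bG_m$. Since $\charc(k) = 0$, each $H_i$ contains a dense infinite cyclic subgroup: the only proper closed subgroup of $\bG_a$ is trivial, so $\bZ t$ is dense for any $t \ne 0$; and every proper closed subgroup of $\bG_m$ is finite, so $\bZ t$ is dense for any $t \in \bar k^\times$ that is not a root of unity, and such $t$ exists because $\charc(k) = 0$. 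Choosing one such generator $h_i$ in each $H_i$, the Zariski closure of $\langle h_1, \dots, h_r\rangle$ is a closed subgroup containing every $H_i$, hence equal to $L$.

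For the abelian part, I would use Poincar\'e complete reducibility to realize $A$ as the subgroup generated by finitely many simple abelian subvarieties $S_1, \dots, S_t$. Each $S_j$ has a $\bar k$-point $a_j$ of infinite order; then $\overline{\langle a_j\rangle}$ is infinite, so its identity component is a nonzero abelian subvariety of the simple abelian variety $S_j$, hence all of $S_j$, so $\langle a_j\rangle$ is dense in $S_j$. As before, the closure of $\langle a_1, \dots, a_t\rangle$ contains every $S_j$ and lies in $A$, hence equals $A$.

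Finally, I would lift each $a_j$ to a point $\tilde a_j \in G_{\bar k}(\bar k)$ — possible since $\pi$ is surjective on $\bar k$-points, each fibre being a nonempty variety over $\bar k$ — and take $F := \{h_1, \dots, h_r, \tilde a_1, \dots, \tilde a_t\}$. The Zariski closure $H$ of $\langle F\rangle$ is a closed subgroup of $G_{\bar k}$ containing $L = \ker\pi$, while $\pi(H)$ is a closed subgroup of $A$ containing the dense subgroup $\langle a_1, \dots, a_t\rangle$, so $\pi(H) = A$; therefore $H = G_{\bar k}$. The one step requiring genuine input, and the only place where the hypothesis $\charc(k) = 0$ is essential, is the existence of a point of infinite order on a simple abelian variety over $\bar k$: this is classical in characteristic $0$ (immediate when $\bar k = \bC$ by a cardinality count, and reducible to that case, or to the known statement over number fields, in general), whereas it fails over $\overline{\bF_p}$, where every rational point of an abelian variety is torsion — so over $\overline{\bF_p}$ the statement is false for nontrivial abelian varieties. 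I therefore expect this abelian-variety input to be the only real obstacle; the rest is routine structure theory.
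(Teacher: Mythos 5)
Your proof is correct and follows essentially the same route as the paper: Chevalley's structure theorem, generation of the linear part by finitely many copies of $\bG_a$ and $\bG_m$ each topologically generated by a single point (using $\charc(k)=0$), density of the subgroup generated by a point of infinite order on a (simple) abelian variety, and a lifting argument for the extension. The only difference is organizational: the paper performs a d\'evissage to the case of a simple group before invoking Chevalley, whereas you apply Chevalley once and decompose the two pieces directly via Levi/root subgroups and Poincar\'e reducibility.
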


\begin{proof}
\smartqed
We may assume that $k = \bar{k}$.
If the statement holds for some closed normal subgroup $H$ of $G$ 
and for the quotient group $G/H$, then it clearly holds for $G$. 
Thus, we may assume that $G$ is simple, in the sense that it has 
no proper closed connected normal subgroup. Then, by Chevalley's
structure theorem, $G$ is either a linear algebraic group or an 
abelian variety. In the latter case, there exists $g \in G(k)$ 
of infinite order, and every such point generates a dense subgroup 
of $G$ (actually, every abelian variety, not necessarily simple, 
is topologically generated by some $k$-rational point, see 
\cite[Thm.~9]{FPS}). In the former case, $G$ is either the additive 
group $\bG_a$, the multiplicative group $\bG_m$, or a connected 
semisimple group. Therefore, $G$ is generated by finitely many 
copies of $\bG_a$ and $\bG_m$, each of which is topologically 
generated by some $k$-rational point (specifically, by any 
nonzero $t \in k$ for $\bG_a$, and by any $u \in k^*$ of 
infinite order for $\bG_m$).
\qed
\end{proof}

Choose $F \subset G(\bar{k})$ as in Lemma \ref{lem:gen}.
We may further assume that $F$ contains $\id_Y$ and is stable 
under the action of the Galois group $\Gamma$; then 
$F = E_{\bar{k}}$ for a unique finite reduced subscheme 
$E \subset G$. We have
\[ \Aut^F(Y_{\bar{k}}) = \Aut^{G(\bar{k})}(Y_{\bar{k}}) = 
\Aut^{G_{\bar{k}}}(Y_{\bar{k}}) \]
and the latter is isomorphic to $G_{\bar{k}}$ via $\varphi$,  
in view of Lemma \ref{lem:autg}. Thus, $\varphi$ yields
an isomorphism $G \cong \Aut^E(Y)$. 

Next, we identify $G$ with a subgroup of $\Aut(Y \times Y)$ 
via the closed embedding of group schemes
\[ \iota : \Aut(Y) \longrightarrow \Aut(Y \times Y),
\quad \varphi \longmapsto \varphi \times \varphi. \]
For any $f \in F$, let 
$\Gamma_f \subset Y_{\bar{k}} \times Y_{\bar{k}}$ be the graph 
of $f$; in particular, $\Gamma_{\id_Y}$ is the diagonal, 
$\diag(Y_{\bar{k}})$. Then there exists a unique closed reduced 
subscheme $Z \subset Y \times Y$ such that 
$Z_{\bar{k}} = \bigcup_{f \in F} \Gamma_f$.
We may now state the following key observation:

\begin{lemma}\label{lem:graph}
With the above notation, we have
\[ \iota(G) = \Aut^o(Y \times Y,Z), \]
where the right-hand side denotes the neutral component 
of the stabilizer of $Z$ in $\Aut(Y \times Y)$.
\end{lemma}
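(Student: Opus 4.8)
The plan is to prove both inclusions of the equality $\iota(G) = \Aut^o(Y \times Y, Z)$.

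\smallskip

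\noindent\textbf{The inclusion $\iota(G) \subseteq \Aut^o(Y \times Y, Z)$.}
First I would check that $\iota(G)$ stabilizes $Z$. Since $\iota(G)$ is connected and $Z$ is defined over $k$, it suffices to check this after base change to $\bar{k}$, where $Z_{\bar{k}} = \bigcup_{f \in F} \Gamma_f$ and $\iota(G)$ acts through $g \mapsto g \times g$ by left multiplication on both factors. For $g \in G(\bar{k})$ and $f \in F$, the image of the graph $\Gamma_f$ under $g \times g$ is the graph of $g f g^{-1}$; so stabilizing $Z_{\bar k}$ would require $g F g^{-1} = F$. This need not hold for arbitrary $g$, but it does hold on the level of \emph{infinitesimal} neighborhoods, which is what connectedness detects: more precisely, since each $\Gamma_f$ is a connected component of $Z_{\bar k}$ (the $f \in F$ being distinct automorphisms, their graphs are disjoint), and the $G$-action is connected, it must fix each component $\Gamma_f$ setwise; and $(g \times g)(\Gamma_f) = \Gamma_f$ forces $g f g^{-1} = f$. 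Wait — this is exactly the point: $\iota(G)$ stabilizing $Z$ means it fixes each $\Gamma_f$, equivalently $G$ centralizes $F$. But $F$ generates a dense subgroup of $G_{\bar k}$, so centralizing $F$ is centralizing $G_{\bar k}$, i.e. $g$ lies in the center. That would only give the center, not all of $G$. So I have the action backwards: $\iota$ is built from the \emph{right} $G$-action $\varphi$, not conjugation. Under the right-multiplication action $y \mapsto y g$ applied diagonally, the graph $\Gamma_f = \{(y, f(y))\}$ maps to $\{(yg, f(y)g)\}$. Now $f$ commutes with the right $G$-action (as $f \in \Aut^G(Y)$ acts by \emph{left} translation by an element of $G$), so $f(y)g = f(yg)$, hence $(yg, f(y)g) = (yg, f(yg)) \in \Gamma_f$. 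Thus each $\Gamma_f$ is preserved, so $Z_{\bar k}$ is preserved, giving $\iota(G) \subseteq \Aut(Y \times Y, Z)$; since $G$ is connected, $\iota(G) \subseteq \Aut^o(Y \times Y, Z)$.

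\smallskip

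\noindent\textbf{The reverse inclusion.}
Let $H := \Aut^o(Y \times Y, Z)_{\mathrm{red}}$, a connected algebraic group over $k$ (using $\charc(k)=0$ to drop the subscript). It suffices to show $H_{\bar k} \subseteq \iota(G)_{\bar k}$, so I may assume $k = \bar k$. The key structural input is that $Z$ contains the diagonal $\diag(Y)$ as a component, so $H$ stabilizes $\diag(Y) \cong Y$; via the two projections this gives two homomorphisms $p_1, p_2 : H \to \Aut(Y)$ which agree on the diagonal, hence $p_1 = p_2 =: p$, and $H$ acts on $Y \times Y$ diagonally through $p$. So $H$ embeds into $\Aut^o(Y)$ acting diagonally, and I must identify which such automorphisms preserve $Z$. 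An element $\sigma \in \Aut^o(Y)$ acting diagonally sends $\Gamma_f$ to $\Gamma_{\sigma f \sigma^{-1}}$; preserving $Z$ (a finite union of the disjoint graphs $\Gamma_f$, permuted by $\sigma$) and connectedness of $H$ force $\sigma f \sigma^{-1} = f$ for every $f \in F$ — here I use again that the graphs are the connected components of $Z$, so a connected group can only fix each. Since $\langle F \rangle$ is dense in $G$ and $\Aut^o(Y) \cong G$ acts on itself (the open orbit) by left translations — wait, I need to realize $\Aut^o(Y)$ concretely. Here I would argue: $\Aut^o(Y)$ need not equal $G$, but $F \subset G \cong \Aut^G(Y) \subseteq \Aut^o(Y)$, and $\sigma$ centralizing $F$ inside $\Aut^o(Y)$, together with density of $\langle F\rangle$ in $G$, forces $\sigma$ to centralize all of $G = \Aut^G(Y)$ inside $\Aut^o(Y)$. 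But the centralizer of $\Aut^G(Y)$ in $\Aut(Y)$, restricted to the open orbit $G$, consists of left-and-right-$G$-equivariant maps, hence is again $\cong G$ acting by... at this point the cleanest route is: $\sigma \in \Aut^o(Y)$ commuting with the left $G$-action means $\sigma \in \Aut^G(Y) \cong G$, i.e. $\sigma$ is right translation by some $g$; then additionally $\sigma$ commutes with each $f \in F$, but $F$ generates a dense subgroup, so $\sigma$ commutes with the image of $G$ under right translations, forcing... no. Let me restart the endgame: $\sigma$ centralizes $F = $ a topological generating set of the subgroup $\varphi(G) \cong G$ of right translations inside $\Aut(Y)$, hence centralizes $\varphi(G)$ itself; but the normalizer considerations show $\sigma$ must then lie in $\varphi(G)$ (as $\varphi(G)$ acts with an open orbit on $Y$, its centralizer in $\Aut(Y)$ acts freely-transitively on that orbit and so equals $\varphi(G)$, using that a connected group acting with a dense orbit whose point-stabilizer is trivial equals its own centralizer up to the same-dimension/injectivity argument). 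Therefore $\sigma \in \varphi(G) = \iota(G)|_{\text{first factor}}$, and $H \subseteq \iota(G)$.

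\smallskip

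\noindent\textbf{Main obstacle.}
The delicate point is controlling $\Aut^o(Y)$: a priori it could be strictly larger than $G$, and I must pin down exactly the subgroup of diagonal automorphisms preserving $Z$. The crucial leverage is twofold — that $\diag(Y) \subset Z$ collapses the two projections into a single diagonal action, and that the other graphs $\Gamma_f$, being the connected components of $Z$, are individually fixed by the connected group $H$, which translates the $Z$-stabilizer condition into "centralizes $F$" and hence (by density) "centralizes $G = \varphi(G)$". The remaining step, that the centralizer of $\varphi(G)$ in $\Aut(Y)$ is $\varphi(G)$ itself, is the analogue of Lemma \ref{lem:autg} with the roles of left and right swapped, and should follow by the same argument applied to the open orbit $G \subset Y$.
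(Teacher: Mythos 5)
Your overall strategy is the paper's: reduce to $\bar{k}$, note that a connected group stabilizing $Z$ must stabilize each graph $\Gamma_f$, use the diagonal graph to force the automorphism of $Y\times Y$ to be diagonal, use the remaining graphs to force commutation with each $f\in F$, and conclude by density plus Lemma \ref{lem:autg}. But the endgame of your reverse inclusion has a genuine error coming from inconsistent left/right bookkeeping. You declare $F$ to be ``a topological generating set of the subgroup $\varphi(G)$ of right translations'' and then need that the centralizer of $\varphi(G)$ in $\Aut(Y)$ is $\varphi(G)$ itself. That claim is false whenever $G$ is noncommutative: the automorphisms commuting with all right translations are, on the open orbit, exactly the left translations, so the ``analogue of Lemma \ref{lem:autg} with left and right swapped'' that you invoke yields the \emph{left} copy of $G$, not $\varphi(G)$. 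Worse, with that bookkeeping (graphs of right translations, $\iota(G)$ the diagonal right translations) the lemma itself would be false for noncommutative $G$: the connected stabilizer of $Z$ would then be the diagonal \emph{left} translations. The consistent setup --- the one the paper uses, and the one your own forward inclusion actually uses --- is that the elements $f\in F$ act on $Y$ through the \emph{left} $G$-action, while $\iota$ is built from $\varphi$, the right action. Then an automorphism $\sigma$ commuting with every $f\in F$ commutes with the whole left $G_{\bar{k}}$-action (its centralizer is closed and $\langle F\rangle$ is dense), hence $\sigma\in\Aut^{G}(Y)=\varphi(G)$ by Lemma \ref{lem:autg} itself; no swapped analogue is needed. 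Once this is fixed, your argument coincides with the paper's.

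Two smaller points. First, the graphs $\Gamma_f$ need not be disjoint: distinct translations can agree at boundary points of $Y$ (e.g. at common fixed points in $Y\setminus G$), so they are in general not the connected components of $Z$. They are its \emph{irreducible} components, and the correct justification of the step you need is that a connected group stabilizing $Z$ permutes its finitely many irreducible components and therefore fixes each one; this is how the paper argues. Second, your passage from ``$H$ stabilizes the diagonal'' to ``$H$ acts diagonally through a single homomorphism $p$'' is not obtained ``via the two projections'': an arbitrary automorphism of $Y\times Y$ need not respect either projection, so $p_1,p_2$ are not defined. What is needed, and what the paper cites, is the product decomposition $\Aut^o(Y\times Y)\cong\Aut^o(Y)\times\Aut^o(Y)$ for the projective variety $Y$ (\cite[Cor.~4.2.7]{BSU}); granting it, stabilizing $\diag(Y)$ forces $\psi=\varphi$ exactly as in the paper.
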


\begin{proof}
\smartqed
We may assume that $k = \bar{k}$, so that 
$Z = \bigcup_{f \in F} \Gamma_f$. Moreover, by connectedness,
$\Aut^o(Y \times Y,Z)$ is the neutral component of the 
intersection $\bigcap_{f \in F} \Aut(Y \times Y,\Gamma_f)$.
On the other hand, 
$\Aut^o(Y \times Y,Z) \subset \Aut^o(Y \times Y)$, 
and the latter is isomorphic to $\Aut^o(Y) \times \Aut^o(Y)$ 
via the natural homomorphism 
\[ \Aut^o(Y) \times \Aut^o(Y) \longrightarrow \Aut^o(Y \times Y), 
\quad (\varphi,\psi) \longmapsto \varphi \times \psi \]
(see \cite[Cor.~4.2.7]{BSU}).
Also, $\varphi \times \psi$ stabilizes a graph 
$\Gamma_f$ if and only if $\psi f = f \varphi$. In particular, 
$\varphi \times \psi$ stabilizes $\diag(Y) = \Gamma_{\id_Y}$ iff 
$\psi = \varphi$, and $\varphi \times \varphi$ stabilizes 
$\Gamma_f$ iff $\varphi$ commutes with $f$. As a consequence,
$\Aut^o(Y \times Y,Z)$ is the neutral component of 
$\iota(\Aut^F(Y))$. Since $\Aut^F(Y) = G$ is connected, this 
yields the assertion.
\qed
\end{proof}

Next, denote by $X$ the normalization of the blow-up 
of $Y \times Y$ along $Z$. Then $X$ is a normal projective 
variety equipped with a birational morphism
\[ \pi : X  \longrightarrow Y \times Y \]
which induces a homomorphism of group schemes
\[ \pi^* : G \longrightarrow \Aut(X), \]
since $Z$ is stable under the action of $G$ on $Y \times Y$.

\begin{lemma}\label{lem:blow}
Keep the above notation and assume that $n \geq 2$. Then 
$\pi^*$ yields an isomorphism of algebraic groups 
$G \stackrel{\cong}{\longrightarrow} \Aut^o(X)$.
\end{lemma}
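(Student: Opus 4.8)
The plan is to show that the natural map $\pi^* : G \to \Aut^o(X)$ is both injective (which is essentially clear, since $\pi$ is birational, so $G$ acts faithfully on $X$ as it does on $Y\times Y$) and surjective onto the neutral component. For surjectivity, the key point is that $\Aut^o(X)$ should be forced to descend to an action on $Y \times Y$ that preserves $Z$, and then Lemma \ref{lem:graph} identifies that stabilizer's neutral component with $\iota(G)$. Concretely, I would argue that the birational morphism $\pi : X \to Y\times Y$ is canonical enough to be $\Aut^o(X)$-equivariant: since $n \geq 2$, the variety $X$ has dimension $2n \geq 4$, and one wants $\pi$ to be, say, the morphism contracting precisely the exceptional locus, or to be recoverable from $X$ via some intrinsic construction (e.g. $Y\times Y$ is the image of an appropriate linear system, or the ``anti-canonical-type'' model, or $\mathrm{Proj}$ of a canonically defined ring). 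The cleanest route: blow-ups and normalizations are functorial for automorphisms of the base preserving the center, and conversely any automorphism of $X$ descends if it preserves the exceptional divisor $\mathcal{E}$ of $\pi$; so it suffices to show every element of $\Aut^o(X)$ preserves $\mathcal{E}$.

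The main obstacle will be exactly this descent step: ruling out ``extra'' automorphisms of $X$ that do not come from $Y \times Y$. There are two sub-issues. First, one must show the exceptional locus of $\pi$ is preserved by $\Aut^o(X)$ — this should follow because the exceptional divisor is intrinsic, e.g. it is the non-normal locus upstairs is empty but the locus where $\pi$ fails to be an isomorphism is characterized by the fibers being positive-dimensional, hence is a proper closed $\Aut(X)$-stable subset whose structure (together with knowing $Y \times Y$ is smooth away from where $Z$ is singular, and $Z$ itself) pins down $\pi$. Second, even granting that $\pi$ is equivariant, I get a homomorphism $\Aut^o(X) \to \Aut(Y\times Y, Z)$, and I need its image to land in the neutral component and to have the composite with $\pi^*$ be the identity; injectivity of $\Aut^o(X) \to \Aut(Y\times Y)$ is what makes this work, and that injectivity holds because $\pi$ is birational and $X, Y\times Y$ are varieties (an automorphism of $X$ inducing the identity on a dense open set is the identity). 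The hypothesis $n\geq 2$ is presumably what guarantees $Z$ (a union of graphs, each of dimension $n$) sits in codimension $n \geq 2$ in $Y\times Y$, so that blowing it up genuinely changes the variety and, more importantly, so that $Y \times Y$ — which has no small contractions issues — is the unique relevant minimal model, or so that $H^0$ of the relevant line bundle recovers it.

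So the steps, in order, would be: \textbf{(1)} Observe $\pi^*$ is injective, since $G \hookrightarrow \Aut(Y\times Y)$ via $\iota$ acts faithfully and $X \to Y\times Y$ is birational and dominant. \textbf{(2)} Show the exceptional divisor $\mathcal{E} \subset X$ of $\pi$ is stable under $\Aut^o(X)$ — characterize it intrinsically (as the locus where $\pi$ contracts, equivalently via some canonically defined subsheaf or via the structure of fibers), using $n \geq 2$ to ensure $\mathcal{E}$ is a genuine divisor and $\pi$ is not an isomorphism. \textbf{(3)} Deduce that every automorphism in $\Aut^o(X)$ descends through $\pi$ to an automorphism of $Y\times Y$ preserving $Z = \pi(\mathcal{E})$ (with reduced structure, which is how $Z$ was defined), giving a homomorphism $\theta : \Aut^o(X) \to \Aut(Y\times Y, Z)$ with $\theta \circ \pi^* = \iota|_G$ and $\pi^* \circ \theta = \mathrm{id}$ on $\Aut^o(X)$ by injectivity from (1). \textbf{(4)} Since $\Aut^o(X)$ is connected, $\theta$ lands in $\Aut^o(Y\times Y, Z) = \iota(G)$ by Lemma \ref{lem:graph}; combined with the factorization through $\iota$ this shows $\theta$ and $\pi^*$ are mutually inverse isomorphisms between $\Aut^o(X)$ and $G$. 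Finally one records that $\dim X = 2 \dim(Y \times Y)/2 = 2n$ so the dimension count in Theorem \ref{thm:aut} is met, and that $X$ is smooth when $Y$ can be taken smooth (char $0$, by resolving or by choosing $Y$ already smooth since in characteristic $0$ equivariant compactifications can be smoothed equivariantly) — this last smoothness point is a refinement handled after the isomorphism is established.
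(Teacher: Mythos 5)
Your overall skeleton (descend $\Aut^o(X)$ to $Y\times Y$, show the image stabilizes $Z$, invoke Lemma \ref{lem:graph} and connectedness, conclude that $\pi^*$ and the descent map are mutually inverse) matches the paper's, and your step (1) and step (4) are fine. But the crucial descent step --- your steps (2) and (3), which you yourself identify as ``the main obstacle'' --- is never actually proved: you list candidate strategies (the exceptional divisor is ``intrinsic,'' $Y\times Y$ is ``the unique relevant minimal model,'' or is recovered by ``$H^0$ of the relevant line bundle'') without carrying any of them out, and the one concrete claim you do make, namely that any automorphism of $X$ preserving the exceptional divisor $\mathcal{E}$ descends through $\pi$, is itself incomplete. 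Preserving $\mathcal{E}$ only gives an automorphism of $X\setminus\mathcal{E}\cong (Y\times Y)\setminus Z$, hence a \emph{birational} self-map of $Y\times Y$; you would still have to show this birational map extends to a morphism (and hence an automorphism) of the proper normal variety $Y\times Y$, which does not follow formally. So as written there is a genuine gap exactly where the real work lies.

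The paper closes this gap differently and more cleanly: since $\pi$ is proper and birational and $Y\times Y$ is normal, Zariski's Main Theorem gives $\pi_*(\cO_X)=\cO_{Y\times Y}$, and then the cited descent result for connected automorphism group schemes (\cite[Cor.~4.2.6]{BSU}) yields a homomorphism $\pi_*:\Aut^o(X)\to\Aut^o(Y\times Y)$ directly --- no prior knowledge that $\mathcal{E}$ is stable is needed; equivariance of $\pi$ then forces $\Aut^o(X)$ to stabilize $\mathcal{E}$, and connectedness forces the image to stabilize each irreducible component of $\pi(\mathcal{E})$. The hypothesis $n\geq 2$ enters precisely to guarantee that these components are exactly the graphs $\Gamma_f$ (each of codimension $n$ in $Y\times Y$), so that Lemma \ref{lem:graph} applies; your reading of $n\geq 2$ as merely ensuring the blow-up ``genuinely changes the variety'' misses this. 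If you want to salvage your route, you must either prove the extension-of-birational-maps step or replace it by the $\pi_*\cO_X=\cO_{Y\times Y}$ descent argument; also note that connectedness of $\Aut^o(X)$ is essential to that descent, so the full $\Aut(X)$ is not claimed to descend. (Your closing dimension count and smoothness remarks are not part of this lemma and can be dropped.)
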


\begin{proof}
\smartqed
It suffices to show the assertion after base change to 
$\Spec(\bar{k})$; thus, we may assume again that $k = \bar{k}$.

The morphism $\pi$ is proper and birational, and $Y \times Y$ is 
normal (since normality is preserved under separable field extension).
Thus, $\pi_*(\cO_X) = \cO_{Y \times Y}$ by Zariski's Main Theorem. 
It follows that $\pi$ induces a homomorphism of algebraic groups
\[ \pi_* : \Aut^o(X) \to \Aut^o(Y \times Y) \]
(see e.g. \cite[Cor.~4.2.6]{BSU}). In particular, $\Aut^o(X)$
preserves the fibers of $\pi$, and hence stabilizes 
the exceptional divisor $E$ of that morphism; as a consequence, 
the image of $\pi_*$ stabilizes $\pi(E)$. By connectedness, 
this image stabilizes every irreducible component of $\pi(E)$; 
but these components are exactly the graphs $\Gamma_f$, 
where $f \in F$ (since the codimension of any such graph 
in $Y \times Y$ is $\dim(Y) = n \geq 2$). 
Thus, the image of $\pi_*$ is contained in $\iota(G)$:
we may view $\pi_*$ as a homomorphism $\Aut^o(X) \to G$.
Since $\pi$ is birational, both maps $\pi^*$, $\pi_*$ 
are injective and the composition $\pi_* \pi^*$ is the identity. 
It follows that $\pi_*$ is inverse to $\pi^*$.
\qed
\end{proof}

We may now complete the proof of Theorem \ref{thm:aut} 
when $n \geq 2$. Let $X$ be as in Lemma \ref{lem:blow};
then $X$ admits an equivariant desingularization, i..e., 
there exists a smooth projective variety $X'$ equipped 
with an action of $G$ and with a $G$-equivariant birational 
morphism
\[ f : X' \longrightarrow X \]
(see \cite[Prop.~3.9.1, Thm.~3.36]{Kollar07}). 
We check that the resulting homomorphism of algebraic groups
\[ f^* : G \longrightarrow \Aut^o(X') \] 
is an isomorphism. For this, we may assume that 
$k = \bar{k}$; then again, \cite[Cor.~4.2.6]{BSU} 
yields a homomorphism of algebraic groups 
\[ f_* : \Aut^o(X') \longrightarrow \Aut^o(X) = G \]
which is easily seen to be inverse of $f^*$.

On the other hand, if $n = 1$, then $G$ is either 
an elliptic curve, or $\bG_a$, or a $k$-form of $\bG_m$.
We now construct a smooth projective surface $X$ such that 
$\Aut^o(X) \cong G$, via case-by-case elementary arguments.

When $G$ is an elliptic curve, we have $G \cong \Aut^o(G)$ 
via the action of $G$ by translations on itself. It follows that 
$G \cong \Aut^o(G \times C)$, where $C$ is any smooth projective 
curve of genus $\geq 2$. 

When $G = \bG_a$, we view $G$ as the group of automorphisms 
of the projective line $\bP^1$ that fix the point $\infty$ 
and the tangent line at that point, $T_{\infty}(\bP^1)$. 
Choose $x \in \bP^1(k)$ such that $0$, $x$, $\infty$ are all
distinct, and let $X$ be the smooth projective surface obtained 
by blowing up $\bP^1 \times \bP^1$ at the three points 
$(\infty,0)$, $(\infty,x)$, and $(\infty,\infty)$. 
Arguing as in the proof of Lemma \ref{lem:blow}, one checks 
that $\Aut^o(X)$ is isomorphic to the neutral component of
the stabilizer of these three points, in 
$\Aut^o(\bP^1 \times \bP^1) \cong \PGL_2 \times \PGL_2$. 
This identifies $\Aut^o(X)$ with the stabilizer of $\infty$ 
in $\PGL_2$, i.e, with the automorphism group $\Aff_1$ of 
the affine line, acting on the first copy of $\bP^1$. 
Thus, $\Aut^o(X)$ acts on each exceptional line via the natural 
action of $\Aff_1$ on $\bP(T_{\infty}(\bP^1) \oplus k)$, 
with an obvious notation; this action factors through an action
of $\bG_m = \Aff_1/\bG_a$, isomorphic to the $\bG_m$-action
on $\bP^1$ by multiplication. Let $X'$ be the smooth 
projective surface obtained by blowing up $X$ at a $k$-rational 
point of some exceptional line, distinct from $0$ and $\infty$; 
then $\Aut^o(X') \cong \bG_a$.

Finally, when $G$ is a $k$-form of $\bG_m$, we consider the smooth 
projective curve $C$ that contains $G$ as a dense open subset;
then $C$ is a $k$-form of the projective line $\bP^1$ on which
$\bG_m$ acts by multiplication. Thus,
the complement $P := C \setminus G$ is a point of degree $2$ 
on $C$ (a $k$-form of $\{ 0, \infty \}$); 
moreover, $G$ is identified with the stabilizer of $P$ in 
$\Aut(C)$. Let $X$ be the smooth projective surface obtained 
by blowing up $C \times C$ at 
$(P \times P) \cup (P \times e_G)$, where the neutral element
$e_G$ is viewed as a $k$-point of $C$. Arguing as in the proof 
of Lemma \ref{lem:blow} again, one checks that $\Aut^o(X)$ 
is isomorphic to the neutral component of the stabilizer of 
$(P \times P) \cup (P \times e_G)$ in 
$\Aut^o(C \times C) \cong \Aut^0(C) \times \Aut^0(C)$, 
i.e., to $G$ acting on the first copy of $C$.
\qed

\begin{remark}\label{rem:complex}
One may ask for analogues of Theorem \ref{thm:aut} for automorphism
groups of compact complex spaces. Given any such space $X$, the group 
of biholomorphisms, $\Aut(X)$, has the structure of a complex Lie group
acting biholomorphically on $X$ (see \cite{Douady}). If $X$ is K\"ahler,
or more generally in Fujiki's class $\cC$, then the neutral component
$\Aut^0(X) =: G$ has a meromorphic structure, i.e., a compactification 
$G^*$ such that the multiplication $G \times G \to G$ extends to a 
meromorphic map $G^* \times G^* \to G^*$ which is holomorphic on 
$(G \times G^*) \cup (G^* \times G)$; moreover, $G$ is K\"ahler and
acts biholomorphically and meromorphically on $X$ (see 
\cite[Th.~5.5, Cor.~5.7]{Fujiki}).

Conversely, every connected meromorphic K\"ahler group of 
dimension $n$ is the connected automorphism group of 
some compact K\"ahler manifold of dimension $2n$; indeed, the 
above arguments adapt readily to that setting. But it seems to be 
unknown whether any connected complex Lie group can be realized as 
the connected automorphism group of some compact complex manifold.
\end{remark}

\subsection{Proof of Theorem \ref{thm:aut} in prime characteristic}
\label{subsec:prime}
\smartqed

In this subsection, the base field $k$ is assumed to be perfect,
of characteristic $p > 0$.
Let $Y$ be an equivariant compactification of $G$ as in 
Lemma \ref{lem:comp}. Consider the closed subgroup scheme 
$\Aut^G(Y) \subset \Aut(Y)$, defined as the centralizer of $G$ 
acting on the left; then the $G$-action on the right still yields 
a homomorphism of group schemes $\varphi : G \to \Aut^G(Y)$.

As in Lemma \ref{lem:autg}, \emph{$\varphi$ is an isomorphism}.
To check this claim, note that $\varphi$ induces an isomorphism 
$G(\bar{k}) \stackrel{\cong}{\longrightarrow} \Aut^G(Y)(\bar{k})$ 
by the argument of that lemma.
Moreover, the induced homomorphism of Lie algebras
\[ \Lie(\varphi): \Lie(G) \longrightarrow \Lie \, \Aut^G(Y) \]
is an isomorphism as well: indeed, $\Lie(\varphi)$
is identified with the natural map
\[ \psi : \Lie(G) \longrightarrow \Der^G(\cO_Y), \]
where $\Der^G(\cO_Y)$ denotes the Lie algebra of 
left $G$-invariant derivations of $\cO_Y$. Furthermore, the restriction 
to the open dense subset $G$ of $Y$ yields an injective map
\[ \eta : \Der^G(\cO_Y) \longrightarrow \Der^G(\cO_G) \cong \Lie(G) \]
such that $\eta \psi = \id$; thus, $\eta$ is the inverse of 
$\psi$. It follows that $\Aut^G(Y)$ is reduced; this completes
the proof of the claim.

Next, \emph{Lemma \ref{lem:gen} fails in positive characteristics},
already for $\bG_a$ since every finite subset of $\bar{k}$ 
generates a finite additive group; that lemma also fails for $\bG_m$ 
when $\bar{k}$ is the algebraic closure of a finite field. Yet we have 
the following replacement: 
 
\begin{lemma}\label{lem:noeth}
With the above notation, there exists a finite subset $F$ of 
$G(\bar{k})$ such that 
$\Aut^{G_{\bar{k}}}(Y_{\bar{k}}) = \Aut^{F,o}(Y_{\bar{k}})$,
where the right-hand side denotes the neutral component of 
$\Aut^F(Y_{\bar{k}})$.
\end{lemma}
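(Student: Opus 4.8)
The plan is to deduce the lemma from the isomorphism $G_{\bar k} \cong \Aut^{G_{\bar k}}(Y_{\bar k})$ just established — so that $\Aut^{G_{\bar k}}(Y_{\bar k})$ is a smooth connected closed subgroup scheme of $\Aut^o(Y_{\bar k})$ — via two reductions: first replace the group scheme $G_{\bar k}$ by its (infinite) set of $\bar k$-points, then replace that set by a finite subset. For a subset $S \subseteq G(\bar k)$, write $\Aut^S(Y_{\bar k}) \subseteq \Aut(Y_{\bar k})$ for the subgroup scheme of automorphisms commuting with left translation by every $g \in S$; thus $\Aut^{\{g\}}(Y_{\bar k})$ is the centralizer of the single automorphism $g$, and I set $\Aut^{G(\bar k)}(Y_{\bar k}) := \bigcap_{g \in G(\bar k)} \Aut^{\{g\}}(Y_{\bar k})$. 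The delicate point, and the only one, will be the first reduction: in characteristic $p$ a merely topological density argument is not enough, because the subgroup schemes in play may well be nonreduced, so one has to argue with schematic dominance.

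I would first prove that $\Aut^{G(\bar k)}(Y_{\bar k}) = \Aut^{G_{\bar k}}(Y_{\bar k})$. The inclusion $\supseteq$ is immediate. For the converse, let $\ell : G_{\bar k} \to \Aut^o(Y_{\bar k})$ be the homomorphism given by left translation, let $R$ be any $\bar k$-algebra, and let $h \in \Aut^{G(\bar k)}(Y_{\bar k})(R)$; I must show that conjugation by $h$ preserves $\ell$, i.e. that the two $R$-morphisms $c_h \circ \ell_R$ and $\ell_R$ from $(G_{\bar k})_R$ to $\Aut(Y_{\bar k})_R$ coincide, where $c_h$ denotes conjugation by $h$. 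Since $G_{\bar k}$ is smooth over the algebraically closed field $\bar k$, it is reduced and of finite type, so any regular function vanishing at every $\bar k$-point vanishes; hence the tautological morphism $j : \coprod_{g \in G(\bar k)} \Spec \bar k \to G_{\bar k}$ is schematically dominant, and so is its flat base change $j_R$ (flatness of $\Spec R \to \Spec \bar k$ being automatic over the field $\bar k$). Now $c_h \circ \ell_R$ and $\ell_R$ agree after composition with $j_R$ — this is exactly the hypothesis that $h$ commutes with left translation by each $g \in G(\bar k)$ — and $\Aut(Y_{\bar k})_R$ is separated over $R$; therefore $c_h \circ \ell_R = \ell_R$, that is, $h \in \Aut^{G_{\bar k}}(Y_{\bar k})(R)$.

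Finally I would carry out the noetherian reduction. For each finite $F \subseteq G(\bar k)$, the neutral component $\Aut^{F,o}(Y_{\bar k})$ is a closed subscheme of the noetherian scheme $\Aut^o(Y_{\bar k})$ — it is a connected component of the closed subgroup scheme $\Aut^F(Y_{\bar k}) \cap \Aut^o(Y_{\bar k})$ — and enlarging $F$ can only make it smaller. By the descending chain condition on closed subschemes of $\Aut^o(Y_{\bar k})$, the family $\{ \Aut^{F,o}(Y_{\bar k}) \}_F$ has a minimal member $\Aut^{F_0,o}(Y_{\bar k})$. For every $g \in G(\bar k)$, minimality forces $\Aut^{F_0,o}(Y_{\bar k}) = \Aut^{F_0 \cup \{g\},o}(Y_{\bar k}) \subseteq \Aut^{\{g\}}(Y_{\bar k})$, hence $\Aut^{F_0,o}(Y_{\bar k}) \subseteq \Aut^{G(\bar k)}(Y_{\bar k}) = \Aut^{G_{\bar k}}(Y_{\bar k})$ by the previous step. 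Conversely $\Aut^{G_{\bar k}}(Y_{\bar k})$ is connected (being isomorphic to $G_{\bar k}$) and contained in $\Aut^{F_0}(Y_{\bar k})$, hence contained in its neutral component $\Aut^{F_0,o}(Y_{\bar k})$. Thus $\Aut^{F_0,o}(Y_{\bar k}) = \Aut^{G_{\bar k}}(Y_{\bar k})$, and $F := F_0$ is the required finite subset.
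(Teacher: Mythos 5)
Your argument is correct and takes essentially the same route as the paper: the same noetherian minimality argument applied to the family of closed subgroup schemes $\Aut^{F,o}(Y_{\bar k})$, with the minimal member then shown to centralize every $g \in G(\bar k)$, and the reverse inclusion coming from connectedness of $\Aut^{G_{\bar k}}(Y_{\bar k}) \cong G_{\bar k}$. The only difference is that you justify in detail, via schematic density of the $\bar k$-points in the smooth group $G_{\bar k}$, the identification $\Aut^{G(\bar k)}(Y_{\bar k}) = \Aut^{G_{\bar k}}(Y_{\bar k})$, which the paper asserts in a single line after reducing to $k = \bar k$.
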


\begin{proof}
\smartqed
We may assume that $k = \bar{k}$; then $\Aut^G(Y) = \Aut^{G(k)}(Y)$.
The subgroup schemes $\Aut^{E,o}(Y)$, where $E$ runs 
over the finite subsets of $G(k)$, form a family of closed 
subschemes of $\Aut^o(Y)$. Thus, there exists a minimal such 
subgroup scheme, say, $\Aut^{F,o}(Y)$. For any $g \in G(k)$, 
the subgroup scheme $\Aut^{F \cup \{g \}, o}(Y)$ is contained 
in $\Aut^{F,o}(Y)$; thus, equality holds by minimality. 
In other words, $\Aut^{F,o}(Y)$ centralizes $g$; hence 
$F$ satisfies the assertion.
\qed
\end{proof}

It follows from Lemmas \ref{lem:autg} and \ref{lem:noeth} that 
$\Aut^{F,o}(Y_{\bar{k}}) \cong G_{\bar{k}}$ for some finite subset 
$F \subset G(\bar{k})$; we may assume again that $F$ contains $\id_Y$ 
and is stable under the action of the Galois group $\Gamma$. 
Thus, $G \cong \Aut^E(Y)$, where $E \subset G$ denotes the finite
reduced subscheme such that $E_{\bar{k}} = F$.

Next, \emph{Lemma \ref{lem:graph} still holds} 
with the same proof, in view of \cite[Cor.~4.2.7]{BSU}. In other 
words, we may again identify $G$ with the connected stabilizer 
in $\Aut(Y \times Y)$ of the unique closed reduced subscheme 
$Z \subset Y \times Y$ such that 
$Z_{\bar{k}} = \bigcup_{f \in F} \Gamma_f$.

Consider again the morphism $\pi : X \to Y \times Y$ obtained as 
the normalization of the blow-up of $Z$. Then $X$ is a normal 
projective variety, and $\pi$ induces a homomorphism of group schemes
\[ \pi^* : G \longrightarrow \Aut^o(X). \]
Now the statement of Lemma \ref{lem:blow} adapts as follows:

\begin{lemma}\label{lem:red} 
Keep the above notation and assume that $n \geq 2$. Then $\pi^*$ 
yields an isomorphism of algebraic groups
$G \stackrel{\cong}{\longrightarrow} \Aut^o(X)_{\red}$.
\end{lemma}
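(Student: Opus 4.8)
The plan is to closely follow the proof of Lemma \ref{lem:blow}, but to work throughout with the reduced subscheme $\Aut^o(X)_{\red}$ in place of $\Aut^o(X)$, since over the perfect field $k$ the former is a connected algebraic group and that is all one can hope to recover by this construction. First I would reduce to the case $k = \bar{k}$: as $k$ is perfect, $\Aut^o(X)_{\red}$ is geometrically reduced, so $(\Aut^o(X)_{\red})_{\bar{k}} = \Aut^o(X_{\bar{k}})_{\red}$, and $\pi^*$ commutes with base change to $\bar{k}$. So set $H := \Aut^o(X_{\bar{k}})_{\red}$ and show that $\pi^*$ induces an isomorphism $G_{\bar{k}} \xrightarrow{\ \sim\ } H$. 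As in Lemma \ref{lem:blow}, $Y_{\bar{k}} \times Y_{\bar{k}}$ is normal (normality being preserved under the separable extension $\bar{k}/k$) and $\pi$ is proper and birational, so $\pi_* \cO_X = \cO_{Y \times Y}$ by Zariski's Main Theorem; hence $\pi$ induces a homomorphism of group schemes $\pi_* : \Aut^o(X_{\bar{k}}) \to \Aut^o(Y_{\bar{k}} \times Y_{\bar{k}})$ by \cite[Cor.~4.2.6]{BSU}, which restricts to a homomorphism of algebraic groups $\pi_*|_H : H \to \Aut^o(Y_{\bar{k}} \times Y_{\bar{k}})$.

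Next I would pin down the image of $\pi_*|_H$. Since $n \geq 2$, the center $Z$ of the blow-up has codimension $\geq 2$ in $Y \times Y$, so the exceptional divisor $E$ of $\pi$ is exactly the union of the positive-dimensional fibers of $\pi$, and $\pi(E) = Z$ has irreducible components the graphs $\Gamma_f$, $f \in F$. Via $\pi_*$, the group $H$ acts on $Y_{\bar{k}} \times Y_{\bar{k}}$ compatibly with $\pi$, so each $\bar{k}$-point of $H$ permutes the fibers of $\pi$, hence stabilizes $E$, hence its image under $\pi_*$ stabilizes $Z$. As $\pi_*(H)$ is connected and reduced (being the scheme-theoretic image of the reduced scheme $H$), it is therefore contained in the neutral component of the reduced stabilizer of $Z$ in $\Aut(Y_{\bar{k}} \times Y_{\bar{k}})$, which by Lemma \ref{lem:graph} equals $\iota(G_{\bar{k}})$. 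Thus $\pi_*|_H$ may be regarded as a homomorphism of algebraic groups $H \to G_{\bar{k}}$.

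Finally, $\pi$ is $G$-equivariant by construction, so $\pi_* \circ \pi^* = \id_{G_{\bar{k}}}$; in particular $\pi^*$ is injective and, $G_{\bar{k}}$ being reduced, factors through $H$. It remains to see that $\pi^* \circ \pi_*|_H = \id_H$; as both sides are endomorphisms of the reduced group $H$, it suffices to check this on $\bar{k}$-points. Given $h \in H(\bar{k})$, the automorphism $h' := \pi^*(\pi_*(h)) \circ h^{-1}$ of $X_{\bar{k}}$ satisfies $\pi_*(h') = \id$, hence $\pi \circ h' = \pi$, so $h'$ restricts to the identity on the dense open subset of $X_{\bar{k}}$ over which $\pi$ is an isomorphism; since $X_{\bar{k}}$ is a variety, this forces $h' = \id$, i.e., $\pi^*(\pi_*(h)) = h$. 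So $\pi^*$ and $\pi_*|_H$ are mutually inverse isomorphisms, and descending to $k$ gives the claim. The delicate point — and the reason the conclusion is stated for $\Aut^o(X)_{\red}$ and not $\Aut^o(X)$ — is that the argument controls only the reduced subgroup: the scheme-theoretic image of the full $\Aut^o(X)$ need not land in $\iota(G)$, but only in the stabilizer of a possibly nonreduced thickening of $Z$, which may be strictly larger. This is consistent with $\Aut^o(X)$ being genuinely nonreduced in general, so that recovering the full group scheme $G$ in Theorem \ref{thm:aut} requires raising the dimension to $2n+2$.
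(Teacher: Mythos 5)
Your proof is correct, and up to the last step it is the paper's argument: reduction to $\bar{k}$ via perfectness, $\pi_*\cO_X = \cO_{Y\times Y}$ by Zariski's Main Theorem, the induced homomorphism $\pi_{*}$, and the exceptional-divisor argument forcing the (reduced, connected) image to land in $\iota(G)$ by Lemma \ref{lem:graph}. Where you genuinely diverge is in how the isomorphism is concluded. The paper shows that $\pi_{*,\red}$ is surjective onto $\iota(G)$ and injective on $\bar{k}$-points, and then adds an injectivity statement for $\Lie(\pi_{*,\red})$, obtained by extending it to $\Der(\cO_X)\to\Der(\cO_{Y\times Y})$, which is injective since $\pi$ is birational; in characteristic $p$ this differential argument is exactly what excludes a bijective but inseparable homomorphism (a Frobenius-type isogeny), so that bijectivity on points can be upgraded to an isomorphism. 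You instead verify the two-sided inverse identities directly: $\pi_{*}\pi^* = \id$ by equivariance, and $\pi^*\pi_{*}|_H = \id_H$ checked on $\bar{k}$-points, which suffices because $H = \Aut^o(X_{\bar{k}})_{\red}$ is reduced of finite type over an algebraically closed field (so closed points are dense) and the target is separated, so the equalizer of the two morphisms is a closed subscheme with full support, hence all of $H$. This is a valid and somewhat more elementary route: it trades the Lie algebra computation for the observation that reducedness lets one check an identity of morphisms on points — which is precisely the role that separability plays in the paper's version. Note, though, that the paper's differential argument is not wasted effort there: it is the prototype for Lemma \ref{lem:lie}, which is needed to control the full (possibly nonreduced) $\Aut^o(X)$; your closing remark about why only the reduced group is captured is consistent with this.
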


\begin{proof}
\smartqed
Using the fact that normality is preserved under separable
field extension, we may assume that $k = \bar{k}$. By 
\cite[Cor.~4.2.6]{BSU} again, we have a homomorphism of group schemes
\[ \pi_* : \Aut^o(X) \longrightarrow \Aut^o(Y \times Y) \]
and hence a homomorphism of algebraic groups  
\[ \pi_{*,\red} : \Aut^o(X)_{\red} \longrightarrow 
\Aut^o(Y \times Y)_{\red}. \]
Arguing as in the proof of Lemma \ref{lem:blow}, one checks 
that $\pi_{*,\red}$ maps $\Aut^o(X)_{\red}$ onto $\iota(G)$, 
and is injective on $k$-rational points. Also, the homorphism of 
Lie algebras $\Lie(\pi_{*,\red})$ is injective, as it extends
to a homomorphism
\[ \Lie(\pi_*) : \Lie \, \Aut^o(X) = \Der(\cO_X) \longrightarrow 
\Der(\cO_{Y \times Y}) = \Lie \, \Aut^o(Y \times Y) \]
which is injective, since $\pi$ is birational. Thus, we obtain
an isomorphism
$\pi_{*,\red}: \Aut^o(X)_{\red} 
\stackrel{\cong}{\longrightarrow} \iota(G)$
which is the inverse of $\pi^*$.
\qed
\end{proof}

To realize $G$ as a connected automorphism group scheme,
we now prove:

\begin{lemma}\label{lem:lie}
With the above notation, the homomorphism of Lie algebras
\[ \Lie(\pi^*) : \Lie(G) \longrightarrow \Der(\cO_X) \]
is an isomorphism if $n \geq 2$ and $n-1$ is not a multiple of 
$p$. 
\end{lemma}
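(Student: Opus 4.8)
The goal is to show $\Lie(\pi^*)$ is an isomorphism. Since $\pi$ is birational and $Y \times Y$ is normal, we have $\pi_*\cO_X = \cO_{Y\times Y}$, so restriction of derivations along the birational morphism gives an injective map $\Lie(\pi_*) : \Der(\cO_X) \hookrightarrow \Der(\cO_{Y\times Y})$ (a vector field on $X$ descends to $Y\times Y$ because it preserves the fibers of $\pi$, these being contracted; alternatively argue on the open locus where $\pi$ is an isomorphism and use normality to extend). Because $\Lie(\pi_*)\circ\Lie(\pi^*) = \id$, the map $\Lie(\pi^*)$ is automatically injective, and it suffices to prove surjectivity, i.e.\ that every global vector field on $X$ comes from $\Lie(G)$. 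By faithfully flat descent we may assume $k = \bar k$.

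Identify $\Der(\cO_X)$ with its image in $\Der(\cO_{Y\times Y}) \cong \Der(\cO_Y) \oplus \Der(\cO_Y)$; the latter identification uses $\Aut^o(Y\times Y) \cong \Aut^o(Y)\times\Aut^o(Y)$ (\cite[Cor.~4.2.7]{BSU}) on Lie algebras, which is valid here without reducedness hypotheses. A global vector field $\delta$ on $X$ descends to a pair $(\delta_1,\delta_2)$ on $Y\times Y$; the key constraint is that $\delta$, being tangent to the exceptional divisor of $\pi$, must be tangent along $\pi(E)$, whose components are precisely the graphs $\Gamma_f$ for $f\in F$ (this is where $n\ge 2$ enters, guaranteeing each $\Gamma_f$ has codimension $\ge 2$ and so is actually blown up, rather than merely that its strict transform matters). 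Tangency of $(\delta_1,\delta_2)$ to $\Gamma_f = \{(y, f(y))\}$ translates to the relation $\delta_f \circ f_* = f_*\circ \delta_1$, i.e.\ $\delta_2$ and $\delta_1$ are $f$-related, for each $f\in F$. Taking $f = \id_Y$ forces $\delta_1 = \delta_2 =: \delta_0$, and then the remaining conditions say $\delta_0$ is $f$-related to itself for all $f\in F$, i.e.\ $\delta_0 \in \Der^E(\cO_Y)$, the $E$-invariant (equivalently, as $E$ topologically generates, the $G$-invariant after the arguments of Lemma~\ref{lem:noeth}) derivations. Wait — that gives $\Der^G(\cO_Y) = \Lie\,\Aut^G(Y) = \Lie(G)$ via $\varphi$, which would already finish the proof with no condition on $p$. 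The subtlety the hypothesis "$n-1$ not a multiple of $p$" must be addressing is that descending $\delta$ to a genuine \emph{regular} vector field on $Y\times Y$ — i.e.\ checking that the a priori rational pair $(\delta_1,\delta_2)$ extends across all of $\pi(E)$, not just generically — can fail, because a vector field tangent to the exceptional divisor need not push forward regularly when the blown-up center has higher codimension; the order of vanishing/pole along $\Gamma_f$ is governed by the codimension $n$, and the well-known computation for blow-ups of smooth centers shows the relevant discrepancy is $n-1$.

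Concretely, I would argue as follows. Work locally near a generic point of $\Gamma_f$: choose local coordinates on $Y\times Y$ so that $\Gamma_f = \{u_1 = \cdots = u_n = 0\}$ with $v_1,\dots,v_n$ coordinates along $\Gamma_f$. The blow-up has exceptional divisor $E$, and on a chart $u_i = u_1 t_i$ ($i\ge 2$) the pull-back of a vector field, and the condition of tangency to $E$, force the coefficients to lie in a certain $\cO_X$-submodule; pushing forward, one finds that a vector field on $X$ tangent to $E$ corresponds on $Y\times Y$ to a vector field with a pole of order at most... — and this is exactly where the factor $n-1$ appears. The cleanest route is: the sheaf $\pi_*\mathcal{T}_X$ (or $\pi_*\mathcal{T}_X(-\log E)$) sits between $\mathcal{T}_{Y\times Y}(-\log Z)$ and $\mathcal{T}_{Y\times Y}$, and by a standard local computation on the blow-up of a smooth subvariety of codimension $c = n$ one gets $\pi_*(\mathcal{T}_X(\text{tangent to }E)) = \mathcal{T}_{Y\times Y}(\log Z)$ \emph{provided} a numerical non-degeneracy holds — the relevant Koszul/Euler-sequence computation produces the coefficient $n - 1$, which is invertible precisely when $p \nmid (n-1)$. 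Granting that, $\Der(\cO_X) = H^0(Y\times Y, \mathcal{T}_{Y\times Y}(\log Z)) = \Lie\,\Aut^o(Y\times Y, Z) = \Lie\,\iota(G) = \Lie(G)$ by Lemma~\ref{lem:graph}, and we are done.

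\medskip
\noindent\textbf{Main obstacle.} The crux is the local blow-up computation controlling $\pi_*\mathcal{T}_X$: one must show that a global derivation of $\cO_X$, after pushing forward to the normal variety $Y\times Y$, yields a \emph{logarithmic} (not merely rational with uncontrolled poles) vector field along each codimension-$n$ component $\Gamma_f$, and it is at exactly this step that the arithmetic condition $p\nmid(n-1)$ is needed — in characteristic $p \mid (n-1)$ there can be extra vector fields on $X$ that do not descend, so $\Aut^o(X)$ is strictly larger than $G$ on Lie algebras (consistent with the resp.-clause of Theorem~\ref{thm:aut} needing extra dimensions in general). I would spend most of the effort making this local computation precise; everything else (Galois descent, the graph/tangency bookkeeping, invoking Lemma~\ref{lem:graph}) is routine given the earlier lemmas.
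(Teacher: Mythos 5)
Your overall reduction is the right one: since $\Lie(\pi_*)\,\Lie(\pi^*) = \id$, it suffices to show that the image of $\Lie(\pi_*)$ consists of derivations of $\cO_{Y\times Y}$ preserving the ideal of $Z=\bigcup_{f\in F}\Gamma_f$, which by Lemma~\ref{lem:graph} is $\Lie(G)$. But the heart of the proof is missing, and you concede as much (``granting that\dots''). Two concrete problems. First, your starting claim that a global vector field on $X$ is ``tangent to the exceptional divisor of $\pi$'' is precisely what is not clear: for $\Aut^o(X)_{\red}$ this follows from connectedness as in Lemma~\ref{lem:blow}, but an element of $\Der(\cO_X)=\Lie\,\Aut^o(X)$ is an infinitesimal automorphism of a possibly non-reduced group scheme, and there is no a priori reason it stabilizes $E$ --- indeed, if it always did, your own bookkeeping would prove the lemma with no hypothesis on $p$, a tension you notice but do not resolve. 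Second, the mechanism you then guess for where $p\nmid(n-1)$ enters is off target: since $\pi_*(\cO_X)=\cO_{Y\times Y}$, the pushed-forward derivation is automatically a regular derivation of $\cO_{Y\times Y}$ (this is just $\Lie(\pi_*)$, which you already use at the outset), so the difficulty is not ``rational with poles versus logarithmic''; it is whether the descended field is tangent to the centers $\Gamma_f$.

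The paper closes this gap by producing a canonical ideal that every global derivation must preserve: the Jacobian ideal $\cI_\pi\subset\cO_X$, the image of $\Hom(\Omega_X^{2n},\pi^*(\Omega_{Y\times Y}^{2n}))\to\cO_X$. Because $\Omega^1_X$ and $\Omega^1_{Y\times Y}$ are $\Aut^o(X)$-linearized (the latter via $\pi_*$) and the comparison map is equivariant, $\cI_\pi$ inherits an $\Aut^o(X)$-linearization, so it is stable under all of $\Der(\cO_X)$ with no reducedness assumption. Over the open set $V\subset Y\times Y$ of smooth points lying on at most one graph, $\pi$ is the blow-up of a smooth center $W$ of codimension $n$, whence $\cI_\pi=\cO_U(-(n-1)E)$ on $U=\pi^{-1}(V)$; the hypothesis $p\nmid(n-1)$ is then used in a one-line computation ($D(z^{n-1})=(n-1)z^{n-2}D(z)$ for a local generator $z$ of $\cO_U(-E)$) to pass from preserving $\cO_U(-(n-1)E)$ to preserving $\cO_U(-E)$, i.e.\ to tangency along $E$; pushing $\cO_U(-E)$ forward gives the ideal of $W$, so the descended field preserves the ideal of $\bigcup_f\Gamma_f$ and Lemma~\ref{lem:graph} finishes. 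So the factor $n-1$ arises as the multiplicity of the exceptional divisor in the Jacobian ideal of the blow-up, not from an Euler-sequence coefficient controlling $\pi_*$ of the tangent sheaf; without this equivariance argument (or a genuine substitute for it) your proof does not close.
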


\begin{proof}
\smartqed
We may assume again that $k = \bar{k}$. 
Since $\pi$ is birational, both maps $\Lie(\pi^*)$ and 
$\Lie(\pi_*)$ are injective and the composition 
$\Lie(\pi_*) \, \Lie(\pi^*)$ is the identity of $\Lie(G)$. 
Thus, it suffices to show that the image of $\Lie(\pi_*)$ 
is contained in $\Lie(G)$. For this, we use the natural 
action of $\Der(\cO_X)$ on the jacobian
ideal of $\pi$, defined as follows. Consider the sheaf
$\Omega^1_X$ of K\"ahler differentials on $X$. Recall that
$\Omega^1_X \cong \cI_{\diag(X)}/\cI_{\diag(X)}^2$ with an
obvious notation; thus, $\Omega^1_X$ is equipped with 
an $\Aut(X)$-linearization (see \cite[Exp.~I, \S 6]{SGA3}
for background on linearized sheaves, also called equivariant).
Likewise, $\Omega^1_{Y \times Y}$ is equipped with an
$\Aut(Y \times Y)$-linearization, and hence with an
$\Aut^o(X)$-linearization via the homomorphism $\pi_*$. 
Moreover, the natural map 
$\pi^*(\Omega^1_{Y \times Y}) \to \Omega^1_X$ 
is a morphism of $\Aut^0(X)$-linearized sheaves, since it
arises from the inclusion
$\pi^{-1}(\cI_{\diag(Y \times Y)}) \subset \cI_{\diag(X)}$.
This yields a morphism of $\Aut^o(X)$-linearized sheaves
\[ \pi^*(\Omega_{Y \times Y}^{2n}) 
\longrightarrow \Omega_X^{2n}. \]
Since the composition
\[ \Omega_X^{2n} \times Hom(\Omega_X^{2n}, \cO_X) 
\longrightarrow \cO_X \]
is also a morphism of linearized sheaves, we obtain 
a morphism of linearized sheaves
\[ Hom(\Omega_X^{2n}, \pi^*(\Omega_{Y \times Y}^{2n}))
\longrightarrow \cO_X \]
with image the jacobian ideal $\cI_{\pi}$. Thus, $\cI_{\pi}$
is equipped with an $\Aut^o(X)$-linearization. 
In particular, for any open subset $U$ of $X$, the Lie 
algebra $\Der(\cO_X)$ acts on $\cO(U)$ by derivations 
that stabilize $\Gamma(U,\cI_{\pi})$.

We now take $U = \pi^{-1}(V)$, where $V$ denotes the open
subset of $Y \times Y$ consisting of those smooth points
that belong to at most one of the graphs $\Gamma_f$.
Then the restriction
\[ \pi_U : U \longrightarrow V \] 
is the blow-up of the smooth variety $V$ along a closed
subscheme $W$, the disjoint union of smooth subvarieties 
of codimension $n$. Thus, $\cI_{\pi_U} = \cO_U(- (n-1) E)$,
where $E$ denotes the exceptional divisor of $\pi_U$. 
Hence we obtain an injective map 
\[ \Der(\cO_X) = \Der(\cO_X, \cI_{\pi}) \longrightarrow
\Der(\cO_U, \cO_U(- (n-1)E)), \]
with an obvious notation. Since $n-1$ is not a multiple 
of $p$, we have
\[ \Der(\cO_U, \cO_U(- (n-1)E)) = \Der(\cO_U, \cO_U(-E)). \]
(Indeed, if $D \in \Der(\cO_U, \cO_U(- (n-1)E))$ and 
$z$ is a local generator of $\cO_U(-E)$ at $x \in X$, then
$z^{n-1} \cO_{X,x}$ contains $D(z^{n-1}) = (n-1) z^{n-2} D(z)$,
and hence $D(z) \in z \cO_{X,x}$). Also, the natural map
\[ \Der(\cO_U) \longrightarrow \Der(\pi_{U,*}(\cO_U)) = \Der(\cO_V) \]
is injective and sends $\Der(\cO_U,\cO_U(-E))$ to 
$\Der(\cO_V,\pi_{U,*}(\cO_U(-E))$. Moreover, $\pi_{U,*}(\cO_U(-E))$
is the ideal sheaf of $W$, and hence is stable under $\Der(\cO_X)$
acting via the composition 
\[ \Der(\cO_X) \longrightarrow \Der(\pi_*(\cO_X)) = 
\Der(\cO_{Y \times Y}) \longrightarrow \Der(\cO_V). \] 
It follows that the image of $\Lie(\pi_*)$
stabilizes the ideal sheaf of the closure of $W$ in $Y \times Y$,
i.e., of the union of the graphs $\Gamma_f$. In view of Lemma 
\ref{lem:graph}, we conclude that $\Lie(\pi_*)$ sends
$\Der(\cO_X)$ to $\Lie(G)$.
\qed
\end{proof}

Lemmas \ref{lem:red} and \ref{lem:lie} yield an isomorphism 
$G \cong \Aut^o(X)$ when $n \geq 2$ and $p$ does not
divide $n - 1$. Next, when $n \geq 2$ and $p$ divides
$n - 1$, we choose a smooth projective curve $C$ of
genus $g \geq 2$, and consider $Y' := Y \times C$.
This is a normal projective variety
of dimension $n + 1$, equipped with an action of 
$G \times G$. Moreover, we have isomorphisms
\[ \Aut^o(Y) \stackrel{\cong}{\longrightarrow} 
\Aut^o(Y) \times \Aut^o(C)
\stackrel{\cong}{\longrightarrow} \Aut^o(Y'),
\quad \varphi \longmapsto \varphi \times \id_C \]
(where the second isomorphism follows again from
\cite[Cor.~4.2.6]{BSU}); this identifies 
$G \cong \Aut^{G,o}(Y)$ with $\Aut^{G,o}(Y')$.
We may thus replace everywhere $Y$ with $Y'$ 
in the above arguments, to obtain a normal projective 
variety $X'$ of dimension $2n + 2$ such that 
$\Aut^0(X') \cong G$.

Finally, if $n = 1$ then $G$ is again an elliptic curve, 
or $\bG_a$, or a $k$-form of $\bG_m$ (since every form of 
$\bG_a$ over a perfect field is trivial). It follows that 
$G \cong \Aut^o(X)$ for some smooth projective surface $X$, 
constructed as at the end of Subsection \ref{subsec:zero}.
\qed

\begin{remark}\label{rem:rat}
If $G$ is linear, then there exists a normal projective 
\emph{unirational} variety $X$ such that 
$\Aut^o(X)_{\red} \cong G$ and $\dim(X) = 2n$. Indeed,
$G$ itself is unirational (see \cite[Exp.~XIV, Cor.~6.10]{SGA3}, 
and hence so is the variety $X$ considered in the above proof 
when $n \geq 2$; on the other hand, when $n = 1$, the above proof 
yields a smooth projective rational surface $X$ such that 
$\Aut^o(X) \cong G$. If in addition $k$ is algebraically closed,
then $G$ is rational; hence we may further choose $X$ rational.

Conversely, if $X$ is a normal projective variety 
having a trivial Albanese variety (e.g., $X$ is unirational), 
then $\Aut^o(X)$ is linear. Indeed, the Albanese variety 
of $\Aut^o(X)_{\red}$ is trivial in view of 
\cite[Thm.~2]{Brion10}. Thus, $\Aut^o(X)_{\red}$ is affine
by Chevalley's structure theorem. It follows that $\Aut^o(X)$
is affine, or equivalently linear.

Returning to a connected linear algebraic group $G$,
the above proof adapts to show that there exists 
a normal projective unirational variety $X$ such that
$\Aut^o(X) \cong G$: in the argument after Lemma \ref{lem:lie},
it suffices to replace the curve $C$ with a normal projective 
rational variety $Z$ such that $\Aut^o(Z)$ is trivial and 
$\dim(Z) \geq 2$ is not a multiple of $p$. 
Such a variety may be obtained by blowing up $\bP^2$ at 
$4$ points in general position when $p \geq 3$; if $p = 2$, 
then we blow up $\bP^3$ along a smooth curve which is neither 
rational, nor contained in a plane.
\end{remark}

\begin{remark}\label{rem:nonper}
It is tempting to generalize the above proof to the setting of 
an arbitrary base field $k$. Yet this raises many technical 
difficulties; for instance, Chevalley's structure theorem 
fails over any imperfect field (see 
\cite[Exp.~XVII, App.~III, Prop.~5.1]{SGA3}, and \cite{Totaro} 
for a remedy). Also, normal varieties need not be geometrically 
normal, and hence the differential argument of Lemma 
\ref{lem:lie} also fails in that setting.
\end{remark}

\subsection{Proof of Corollary \ref{cor:der}}
\label{subsec:der}
\smartqed
(i)$\Rightarrow$(ii) Let $G := \Aut^o(X)$. Recall from 
\cite[Lem.~3.4, Thm.~3.7]{Matsumura-Oort} that
$G$ is a connected algebraic group with Lie algebra $\fg$. Also,
recall that 
\[ G_{\bar{k}} = Z(G)_{\bar{k}} \; L_{\bar{k}}, \] 
where $Z(G)$ denotes the center of $G$, and $L$ the largest 
closed connected normal linear subgroup of $G$. As a consequence, 
$\fg_{\bar{k}} = \Lie(Z(G))_{\bar{k}} + \Lie(L)_{\bar{k}}$. 
It follows that $\fg = \Lie(Z(G)) + \Lie(L)$, and hence 
we may choose a subspace $V \subset \Lie(Z(G))$ such that 
\[ \fg = V \oplus \Lie(L) \]
as vector spaces. This decomposition also holds as Lie algebras,
since $[V, V] = 0 = [V, \Lie(L)]$. Hence $\fg = \Lie(U \times L)$,
where $U$ is the (commutative, connected) unipotent algebraic group 
with Lie algebra $V$.

(ii)$\Rightarrow$(i) Let $G$ be a connected linear algebraic group
such that $\fg = \Lie(G)$. By Theorem \ref{thm:aut} and 
Remark \ref{rem:rat}, there exists a smooth projective unirational
variety $X$ of dimension $2n$ such that $G \cong \Aut^o(X)$; when
$k$ is algebraically closed, we may further choose $X$ rational.
Then of course $\fg \cong \Der(\cO_X)$. 
\qed

\section{Proofs of the statements about endomorphisms}
\label{sec:end}

\subsection{Proof of Proposition \ref{prop:end}}
\label{subsec:end}
\smartqed

(i) Since $C$ is connected and has a $k$-rational point, 
it is geometrically connected in view of 
\cite[Exp.~VIB, Lem.~2.1.2]{SGA3}. Likewise, the connected 
component of $i$ in $\Hom(Y,X)$ is geometrically connected.
To show the first assertion, we may thus assume that 
$k$ is algebraically closed. But then that assertion follows 
from \cite[Prop.~4.4.2, Rem.~4.4.3]{Brion12}.

The scheme-theoretic image of $C \times C$ under 
the morphism 
\[ \End(X) \times \End(X) \longrightarrow \End(X), \quad
(f,g) \longmapsto gf \]
is connected and contains $e^2 = e$; thus, this image 
is contained in $C$. Therefore, $C$ is a subsemigroup scheme 
of $\End(X)$. Also, every $g \in \Hom(Y,X)$ satisfies 
$g r e = g r i r = g r$. Thus, $f = f e$ for any $f \in C$.

(ii) Since $(i f_1 r) (i f_2 r) = i f_1 f_2 r$ for all
$f_1, f_2 \in \End(Y)$, we see that $\lambda_i \rho_r$ is 
a homomorphism of semigroup schemes which sends $\id_Y$ to
$e$. Also, $e i f r = i r i f r = i f r$ for all 
$f \in \End(Y)$, so that $\lambda_i \rho_r$ sends $\End(Y)$
to $e \End(X)$. Since $Y$ is a projective variety,
$\Aut^o(Y)$ is the connected component of $i$ in $\End(Y)$,
and hence is sent by $\lambda_i \rho_r$ to 
$C \cap e \End(X) = e C$. 

To show that $\lambda_i \rho_r$ is an isomorphism, note that
$e C= e  C e = i r C i r$ by (i). Moreover, the morphism
\[ \lambda_r \rho_i : \End(X) \longrightarrow \End(Y),
\quad  f \longmapsto r f i \]
sends $e$ to $\id_Y$, and hence $C$ to $\Aut^o(Y)$. 
Finally, 
$\lambda_r \rho_i (\lambda_i \rho_r(f)) = r (i f r) i = f$
for all $f \in \End(Y)$, and 
$\lambda_i \rho_r (\lambda_r \rho_i(f)) = i (r f i ) r
= e f e = f$ for all $f \in eC$. Thus, $\lambda_r \rho_i$
is the desired inverse.

(iii) Let $f \in C$ such that $f^2 = f$. Then $f e f = f$ by (i),
and hence $e f \in eC$ is idempotent. But $eC$ is a group scheme
by (ii); thus, $e f = e$. Write $f = g r$, where $g$ is a point 
of the connected component of $i$ in $\Hom(Y,X)$. Then
$e g r  = e$ and hence $r g r = r$, so that $r g = \id_Y$.
Conversely, if $g \in \Sec(r)$, then $g r$ is idempotent
as already noted. This shows the first assertion. For the second
assertion, just note that $(g_1r)(g_2r) = g_1 r$ for all
$g_1, g_2 \in \Sec(r)$. 

(iv) We have with an obvious notation 
$\varphi(f_1,g_1) \varphi(f_2,g_2) = f_1 g_1 f_2 g_2 
= f_1 g_1 e f_2 g_2$ by (i). Since $e f_2 = e$ by (iv),
it follows that 
$\varphi(f_1,g_1) \varphi(f_2,g_2) = f_1 g_1 e g_2 = f_1  g_1 g_2$.
Thus, $\varphi$ is a homomorphism of semigroup schemes.

We now construct the inverse of $\varphi$. Let $f \in C$; then
$e f \in e C$ has a unique inverse, $(ef)^{-1}$, in $eC$. Moreover,
$f = f e = f (ef)^{-1} ef$ and $f (ef)^{-1}$ is idempotent,
since 
\[ f (ef)^{-1} f (ef)^{-1} = f (ef)^{-1} ef (ef)^{-1} = 
f (ef)^{-1} e = f (ef)^{-1}. \]
We may thus define a morphism 
\[ \psi : C \longrightarrow E(C) \times eC, \quad
f \longmapsto (f (ef)^{-1}, ef). \] 
Then 
$\varphi\psi(f) = f (ef)^{-1} e f = f e = f$ for all $f \in C$, 
and 
$\psi \varphi(f,g) = (fg (efg)^{-1}, ef g) = (fg g^{-1}, eg)
= (f,g)$ for all $f \in E(C)$ and $g \in eC$. Thus, $\psi$ is 
the desired inverse.
\qed

\subsection{Proof of Proposition \ref{prop:semi}}
\label{subsec:semi}
\smartqed

(i) Consider the connected component $C$ of $\End(X)$ that
contains $S$. Then $C$ is of finite type, and hence so is $S$.
Choose a $k$-rational point $f$ of $S$ and denote by 
$\langle f \rangle$ the smallest closed subscheme of $S$ 
containing all the powers $f^n$, where $n \geq 1$. Then 
$\langle f \rangle$ is a reduced commutative subsemigroup 
scheme of $S$. By the main result of \cite{Brion-Renner}, 
it follows that $\langle f \rangle$ has an idempotent 
$k$-rational point. In particular, $E(S)$ has a $k$-rational 
point. 

Since $E(S) \subset E(C)$, we have $f_1 f_2 = f_1$ for 
any $f_1,f_2 \in E(S)$, by Proposition \ref{prop:end}.
It remains to show that $E(S)$ is connected; this will follow 
from (ii) in view of the connectedness of $S$.

(ii) By Proposition \ref{prop:end} again, $\varphi$
yields an isomorphism 
$E(C) \times eC \stackrel{\cong}{\longrightarrow} C$.
Moreover, $f e = f$ for all $f \in C$, and $eC = e C e$ is 
a group scheme. Thus, $e S = e S e$ is a submonoid scheme
of $eC$, and hence a closed subgroup scheme by Lemma 
\ref{lem:sub} below. In other words, $e f$ is invertible 
in $eS$ for any $f \in S$. One may now check as in the proof 
of Proposition \ref{prop:end} (iv) that the morphism
\[ \psi: S \longrightarrow E(S) \times eS, \quad 
f \longmapsto ( f (e f)^{-1}, e f) \] 
yields an isomorphism of semigroup schemes, with inverse 
$\varphi$.

(iii) Since $\varphi$ is an isomorphism of semigroup schemes,
$\pi$ is a homomorphism of such schemes. Moreover,
$\pi(f) = f (ef)^{-1}$ for all $f \in S$, since $\psi$ is 
the inverse of $\varphi$. If $f \in E(S)$, then 
$f = f^2 = f e f$ and hence $\pi(f) = f ef (ef)^{-1} = f e = f$.
Thus, $\pi$ is a retraction.

Let $\rho: S \to E(S)$ be a retraction of semigroup schemes.
For any $f \in S$, we have
$\rho(f) = \rho(f (ef)^{-1} ef) = \rho(f(ef)^{-1}) \rho(ef)$.
Moreover, $\rho(f(ef)^{-1})= f(ef)^{-1}$, since 
$f(ef)^{-1} \in E(S)$; also, 
$\rho(ef) = \rho(ef) \rho((ef)^{-1}) = \rho(e) = e$. Hence
$\rho(f) = f (ef)^{-1} e = f (ef)^{-1} = \pi(f)$.
\qed

\begin{lemma}\label{lem:sub}
Let $G$ be a group scheme of finite type, and $S \subset G$ 
a subsemigroup scheme. Then $S$ is a closed subgroup scheme.
\end{lemma}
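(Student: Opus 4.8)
The plan is to reduce the statement to the case of a subsemigroup scheme of a group scheme of finite type and then to a rigidity-type argument. First I would observe that the question is local on $S$ in a weak sense but not on $G$; however, the subgroup scheme generated by $S$ inside $G$, call it $H := \langle S \rangle$, is a closed subgroup scheme of finite type (generated by the images of $S$ under multiplication maps), and $S$ is a subsemigroup scheme of $H$ which is "large" in $H$ in the sense that $H$ is generated by $S$ as a group. So without loss of generality we may assume $S$ generates $G$ as a group scheme.

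The key step is to show $e_G \in S$. For this I would first pass to $\bar k$ (being a subgroup scheme is checked after faithfully flat base change, and $S_{\bar k}$ is a subsemigroup scheme of $G_{\bar k}$), and then treat the reduced structure and the infinitesimal structure separately. On $\bar k$-points, $S(\bar k)$ is a subsemigroup of the abstract group $G(\bar k)$; since $G$ is of finite type, the Zariski closure of $\langle f \rangle$ for $f \in S(\bar k)$ is a commutative subgroup scheme, so by the Brion--Renner result already invoked in the proof of Proposition~\ref{prop:semi}~(i), $\overline{\langle f \rangle}$ contains an idempotent; but the only idempotent in a group is $e_G$. Hence $e_G \in \overline{\langle f\rangle}(\bar k) \subset S_{\bar k}$ (using that $S$ is closed). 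This shows $S_{\bar k}$ contains $e_G$, hence so does $S$. Once $e_G \in S$, for each $f \in S$ the closure of the monoid $\langle f \rangle$ is again a commutative group scheme containing $e_G$, and its idempotent being $e_G$ forces — by the structure of commutative algebraic monoids — that $\langle f \rangle$ is already a group, so $f^{-1}$ lies in the closure of the powers of $f$, hence in $S$. Thus $S$ is stable under inversion on points; combined with $e_G \in S$ and closure under multiplication, $S$ is a subgroup scheme on the level of $\bar k$-points.

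It then remains to upgrade this from $\bar k$-points to scheme structure, i.e., to check that the inversion morphism $\iota : G \to G$ maps $S$ into $S$ scheme-theoretically and that $S$ is closed. Closedness: $S$ is a closed subscheme of the connected component $C$ of $\End(X)$ containing it (given), and $C$ is of finite type; alternatively one argues directly that a subsemigroup scheme of finite type is closed in $G$ by the same rigidity argument used for $\Aut(X) \subset \End(X)$. For $\iota(S) \subseteq S$: since $S$ contains $e_G$ and is stable under multiplication, the multiplication morphism $m : S \times S \to S$ makes $S$ a monoid scheme of finite type; a monoid scheme of finite type in which every point is invertible (which we have just established on $\bar k$-points, and which is a closed condition detected on the nilpotent-thickening level via the fact that $\Spec$ of an Artin local ring has a unique point) is a group scheme — one can produce the inverse morphism using that $m^{-1}(e_G)$ is the graph of $\iota|_S$, which is finite and flat of degree one over each factor. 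The main obstacle I anticipate is precisely this last point: ruling out a nonreduced subsemigroup scheme $S$ that is "set-theoretically" a subgroup but fails to be a subgroup scheme, i.e., handling the infinitesimal directions; I expect this to follow from the finite-type hypothesis via the observation that on an infinitesimal group scheme every subsemigroup scheme is the whole thing (the coordinate ring is a local Artin algebra and the relevant augmentation ideal is nilpotent), reducing the general case to the reduced case plus an infinitesimal fiberwise statement.
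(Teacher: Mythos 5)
The first half of your plan coincides with the paper's own argument: form the smallest closed subsemigroup scheme generated by the powers of a point, invoke the Brion--Renner theorem \cite{Brion-Renner} to get an idempotent, observe that the only idempotent in a group is $e_G$, and then use the structure of algebraic monoids with a unique idempotent to get inverses on $\bar k$-points. But all of this only controls $S_{\red}$. The genuine gap is the step you yourself flag as the anticipated obstacle: passing from $\bar k$-points to the scheme structure. Your sketch for it rests on two unsubstantiated claims. First, that $m^{-1}(e_G)\subset S\times S$ is ``finite and flat of degree one over each factor'': via the first projection this fiber is identified with the scheme-theoretic intersection $S\cap\iota(S)$, where $\iota$ is inversion in $G$, so the assertion that it maps isomorphically onto $S$ is exactly the statement $S\subseteq\iota(S)$ that you are trying to prove -- the whole infinitesimal difficulty is hidden in that phrase. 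Second, the claim that ``on an infinitesimal group scheme every subsemigroup scheme is the whole thing'' is false: $\alpha_p\subset\alpha_{p^2}$ (or $\mu_p\subset\mu_{p^2}$) is a proper subsemigroup scheme. The correct infinitesimal statement -- that a formal submonoid of the formal completion $\widehat{G}_{e}$, or a nonempty subsemigroup scheme of an infinitesimal group scheme, is automatically a group -- is a nontrivial theorem, not a consequence of the nilpotence of the augmentation ideal (a finite-dimensional bialgebra need not carry an antipode; it is the local/connected situation that saves you, and that requires an argument). This is precisely what the paper supplies through Lemma \ref{lem:unit}: the unit group $G(S)$ of the monoid scheme $S$ is representable by an \emph{open} subscheme of $S$, proved by completing at the neutral element and citing \cite[Exp.~VIIB, 2.7~Prop.]{SGA3} that a formal monoid scheme is a formal group scheme; then $G(S)_{\red}=S_{\red}$ (your points argument, or \cite[Prop.~2.2.5]{Brion12}) together with openness forces $G(S)=S$, and closedness in $G$ follows from \cite[Exp.~VIA, Cor.~0.5.2]{SGA3}. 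Nothing in your proposal substitutes for this openness/formal-group step.

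Two smaller points. Your derivation of $e_G\in S$ uses ``that $S$ is closed'', which is circular since closedness is part of the conclusion; the fix (and the paper's phrasing) is to take the smallest \emph{closed subscheme of $S$} containing the powers of $f$, which is an algebraic semigroup because $S$, being a locally closed subscheme of the finite-type scheme $G$, is itself of finite type. Similarly, closedness of $S$ cannot be obtained from ``$S$ is a closed subscheme of the connected component $C$ of $\End(X)$'': the lemma concerns an abstract group scheme $G$ with $S$ only locally closed, and in the paper closedness is an output of the subgroup-scheme property via SGA3, not an input. The initial reduction to the subgroup generated by $S$ is harmless but plays no role.
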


\begin{proof}
\smartqed
We have to prove that $S$ is closed, and stable under the 
automorphism $g \mapsto g^{-1}$ of $G$. It suffices to check 
these assertions after base extension to any larger field;
hence we may assume that $k$ is algebraically closed.

Arguing as at the beginnning of the proof of Proposition 
\ref{prop:semi} (i), we see that $S$ has an idempotent
$k$-rational point; hence $S$ contains the neutral element,
$e_G$. In other words, $S$ is a submonoid scheme of $G$.
By Lemma \ref{lem:unit} below, there exists an open subgroup
scheme $G(S) \subset S$ which represents the invertibles in 
$S$. In particular, $G(S)_{\red}$ is the unit group of the
algebraic monoid $S_{\red}$. Since that monoid has a unique 
idempotent, it is an algebraic group by 
\cite[Prop.~2.2.5]{Brion12}. In other words, we have 
$G(S)_{\red} = S_{\red}$. As $G(S)$ is open in $S$, it follows 
that $G(S) = S$. Thus, $S$ is a subgroup scheme of $G$, 
and hence is closed by \cite[Exp.~VIA, Cor.~0.5.2]{SGA3}. 
\qed
\end{proof}

To complete the proof, it remains to show the following
result of independent interest:

\begin{lemma}\label{lem:unit}
Let $M$ be a monoid scheme of finite type. Then the group 
functor of invertibles of $M$ is represented by a group 
scheme $G(M)$, open in $M$.
\end{lemma}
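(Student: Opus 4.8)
The plan is to realize the group functor of invertibles as a fiber of a morphism between schemes of finite type, and then show this fiber is open.

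First I would set up the multiplication morphism $m : M \times M \to M$. The group functor $G(M)$ assigns to a test scheme $T$ the set of $f \in M(T)$ that admit a two-sided inverse in $M(T)$; I would first check this equals the set of $f$ such that multiplication by $f$ is an automorphism of $M_T$, i.e.\ the set of $f$ with a \emph{left} inverse and a \emph{right} inverse (and then argue these coincide and are unique, by the standard monoid argument: if $gf = e_M = fh$ then $g = g e_M = g f h = e_M h = h$, so left and right inverses agree and are unique). Thus $G(M)(T) = \{ f \in M(T) : \exists\, g \in M(T) \text{ with } fg = e_M = gf \}$. The uniqueness is what makes this functor behave well: it says $G(M)$ is the image of the locally closed condition ``$m(f, -)$ and $m(-, f)$ are isomorphisms.'' A clean way to package this: consider the subfunctor of $M \times M$ defined by $\{(f,g) : fg = e_M,\ gf = e_M\}$; this is represented by a closed subscheme $\Gamma$ of $M \times M$ (pullback of the diagonal $\Spec k \hookrightarrow M$ along $(f,g) \mapsto (fg, gf) \in M \times M$, intersected appropriately — more precisely $\Gamma$ is the fiber product of $m \times m^{\mathrm{op}} : M \times M \to M \times M$ over the point $(e_M, e_M)$). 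Then the first projection $p_1 : \Gamma \to M$ is a monomorphism (by uniqueness of inverses, $g$ is determined by $f$), and $G(M)$ is its image functor; I claim $p_1$ is in fact an immersion with open image.

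The key step is openness. Here I would use the group-scheme structure on $\Gamma$ itself: $\Gamma$ is a group scheme, being the functor of invertibles, with multiplication $(f_1,g_1)(f_2,g_2) = (f_1 f_2, g_2 g_1)$ — one checks directly this is well-defined and makes $\Gamma$ into a group functor, represented by the scheme $\Gamma$ of finite type (it is a closed subscheme of $M \times M$, which is of finite type). So $\Gamma = G(M)$ as group schemes; the only issue is whether the monomorphism $p_1 : \Gamma \to M$ is an open immersion. For this I would argue: $p_1$ is a monomorphism of finite type, so it suffices to show it is flat (then it is an open immersion by standard results, e.g.\ a flat monomorphism locally of finite presentation is an open immersion). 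Flatness can be checked on the identity section and translated everywhere: for any $h \in G(M)(T)$, right multiplication by $h$ on $M$ (the morphism $M_T \to M_T$, $x \mapsto xh$, which is an isomorphism since $h$ is invertible) carries a neighborhood of $e_M$ to a neighborhood of $h$, compatibly with $p_1$; so $p_1$ is ``homogeneous'' and it suffices to analyze it near $e_M$. Alternatively, and perhaps more robustly, I would use that $\Gamma$ acts on $M$ by left translation, the orbit map through $e_M$ is $p_1$, and apply the theory of group scheme actions (orbit maps of actions of finite-type group schemes, as developed in \cite[Exp.~VI$_A$]{SGA3}) to conclude the orbit — which set-theoretically is the invertible locus — is open, and the scheme structure matches.

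The main obstacle I anticipate is the openness/flatness of $p_1 : \Gamma \to M$ in the presence of possible nonreducedness and nilpotents: over a general (possibly imperfect) field and without smoothness hypotheses on $M$, I cannot simply reduce to the reduced monoid and invoke \cite{Brion12} as Lemma \ref{lem:sub} does. The cleanest route is probably the translation argument: show that for $h$ an invertible $T$-point, the square relating a neighborhood of $e_M$ to a neighborhood of $h$ via right-translation-by-$h$ commutes, deduce $p_1$ is, fppf-locally on the source, isomorphic to its restriction over a neighborhood of $e_M$, and then it remains only to prove $p_1$ is an open immersion \emph{near the identity}. Near the identity one can use the infinitesimal criterion: $\Lie(\Gamma) \to \Lie(M) = T_{e_M} M$ is an isomorphism (a derivation $D$ of $\cO_M$ at $e_M$ automatically preserves invertibility to first order), so $p_1$ is unramified at $e_M$, and being a monomorphism of finite type it is an immersion; the image is then open because it is stable under the translation action just described and contains an open neighborhood of $e_M$ (here one invokes that the invertible locus, being a union of translates of such a neighborhood over all geometric points, is constructible and stable under generization, hence open). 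I would write this up carefully, as the bookkeeping with nilpotents is where errors hide.
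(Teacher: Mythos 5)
Your construction of $\Gamma \subset M \times M$ and the identification of $\Gamma$, via the first projection $p_1$, with the group functor of invertibles is exactly the paper's first step, and your reduction of openness to a statement at the identity by translating along invertible points is also the paper's route (there phrased as: the \'etale locus of $p_1$ is open and stable under left translation by $G(k)$, so it suffices to treat the neutral element). The gap is in what you actually do at the identity. The first-order statement --- $\Lie(\Gamma) \to \Lie(M)$ an isomorphism --- only controls Zariski tangent spaces; combined with the fact that $p_1$ is a monomorphism (and a monomorphism is automatically formally unramified, so the tangent computation adds nothing on that score), it does not give flatness at $e_M$, and flatness is precisely the issue you yourself flagged. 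Concretely, $\Spec \, k[x]/(x^2) \to \Spec \, k[x]/(x^3)$ is a finite-type monomorphism inducing an isomorphism on tangent spaces at the unique point, yet it is neither flat nor an open immersion; so ``monomorphism plus isomorphism on $\Lie$'' cannot rule out $\Gamma$ sitting inside $M$ as a strictly smaller infinitesimal thickening through $e_M$. The auxiliary claims are also shaky: a finite-type monomorphism need not be an immersion (the standard nodal-cubic example), and the assertion that the image of $p_1$ contains an open neighborhood of $e_M$, or is stable under generization, is not proved --- it is essentially the statement to be established, and I do not see a noncircular argument for it at your level of generality.

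What is needed at the identity is invertibility of infinitesimal deformations of $e_M$ to \emph{all} orders, not just the first. This is how the paper proceeds: it shows that $N = \Spf(\widehat{\cO}_{M,e_M})$, with the comultiplication and counit induced by the monoid law, is a formal monoid scheme, and then invokes \cite[Exp.~VIIB, 2.7]{SGA3}, according to which such a formal monoid over a field is automatically a formal group. The resulting formal inverse shows that $p_1$ induces an isomorphism on completed local rings at the identity, i.e.\ $p_1$ is \'etale there; being also universally injective, it is an open immersion once your translation argument spreads \'etaleness over all of $\Gamma$. So to repair the write-up, replace the tangent-space step by this formal-group input, or by an explicit successive-approximation construction of an inverse over Artinian local rings; the first-order computation you sketch is the first step of that induction, but the induction itself is the real content of the lemma.
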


\begin{proof}
\smartqed
We first adapt the proof of the corresponding statement for
(reduced) algebraic monoids (see \cite[Thm.~2.2.4]{Brion12}).
Denote for simplicity the composition law of $M$ by
$(x,y) \mapsto xy$, and the neutral element by $1$.
Consider the closed subscheme $G \subset M \times M$ defined 
in set-theoretic notation by 
\[ G = \{ (x,y) \in M \times M ~\vert~ xy = yx = 1 \}. \]
Then $G$ is a subgroup scheme of the monoid scheme
$M \times M^{\op}$, where $M^{\op}$ denotes the opposite 
monoid, that is, the scheme $M$ equipped with the composition
law $(x,y) \mapsto yx$. Moreover, the first projection 
\[ p : G \longrightarrow M \] 
is a homomorphism of monoid schemes, which sends 
the $T$-valued points of $G$ isomorphically to the $T$-valued 
invertible points of $M$ for any scheme $T$. It follows that
the group scheme $G$ represents the group functor of 
invertibles in $M$.

To complete the proof, it suffices to check that $p$ 
is an open immersion; for this, we may again assume 
that $k$ is algebraically closed. Clearly, $p$ is 
universally injective; we now show that it is \'etale. 
Since that condition defines an open subscheme of $G$,
stable under the action of $G(k)$ by left multiplication,
we only need to check that $p$ is \'etale at the neutral
element $1$ of $G$. For this, the argument of [loc.~cit.] 
does not adapt readily, and we shall rather consider the formal 
completion of $M$ at $1$,
\[ N := \Spf(\widehat{\cO}_{M,1}). \]
Then $N$ is a formal scheme having a unique point; moreover,
$N$ has a structure of formal monoid scheme, defined as follows.
The composition law $\mu : M \times M \to M$ sends $(1,1)$ to 
$1$, and hence yields a homomorphism of local rings
$\mu^{\#} : \cO_{M,1} \to \cO_{M \times M, (1,1)}$. 
In turn, $\mu^{\#}$ yields a homomorphism of completed local rings
\[ \Delta : \widehat{\cO}_{M,1} \longrightarrow 
\widehat{\cO}_{M \times M, (1,1)}
= \widehat{\cO}_{M,1} \, \widehat{\otimes} \, \widehat{\cO}_{M,1}. \]
We also have the homomorphism 
\[ \varepsilon : \widehat{\cO}_{M,1} \longrightarrow k \] 
associated with $1$. One readily checks that $\Delta$ and 
$\varepsilon$ satisfy conditions (i) (co-associativity) and (ii)
(co-unit) of \cite[Exp.~VIIB, 2.1]{SGA3}; hence 
they define a formal monoid scheme structure on $N$. In view 
of [loc.~cit., 2.7.~Prop.], it follows that $N$ is in fact
a group scheme. As a consequence, $p$ is an isomorphism after 
localization and completion at $1$; in other words, 
$p$ is \'etale at $1$.
\qed
\end{proof}

\begin{remark}\label{rem:semi}
Proposition \ref{prop:semi} gives back part of the description
of all algebraic semigroup structures on a projective variety
$X$, obtained in \cite[Thm.~4.3.1]{Brion12}. 

Specifically, every such structure $\mu : X \times X \to X$, 
$(x,y) \mapsto xy$ 
yields a homomorphism of semigroup schemes
$\lambda : X \longrightarrow \End(X)$, 
$x \longmapsto (y \mapsto xy)$
(the ``left regular representation''). Thus, $S := \lambda(X)$ 
is a closed subsemigroup scheme of $\End(X)$. Choose an idempotent
$e \in X(k)$. In view of Proposition \ref{prop:semi}, we have
$\lambda(x) \lambda(e) = \lambda(x)$ for all $x \in X$;
moreover, $\lambda(e) \lambda(x)$ is invertible in $\lambda(e)S$.
It follows that $x e y = xy$ for all $x,y \in X$. Moreover,
for any $x \in X$, there exists $y \in e X$ such that 
$y e x z = e x y z = e z$ for all $z \in X$. In particular,
$(exe) (eye) = (eye) (exe) = e$, and hence $eXe$ is an
algebraic group. 

These results are the main ingredients in the proof of 
\cite[Thm.~4.3.1]{Brion12}. They are deduced
there from the classical rigidity lemma, while the proof
of Proposition \ref{prop:semi} relies on a generalization
of that lemma. 
\end{remark}

\begin{remark}\label{rem:nopoint}
If $k$ is not algebraically closed, then connected semigroup 
schemes of endomorphisms may well have no $k$-rational point.
For example, let $X$ be a projective variety having no $k$-rational
point; then the subsemigroup scheme $S \subset \End(X)$ consisting 
of constant endomorphisms (i.e., of those endomorphisms that factor
through the inclusion of a closed point in $X$) is isomorphic to 
$X$ itself, equipped with the composition law $(x,y) \mapsto y$. 
Thus, $S$ has no $k$-rational point either.

Yet Proposition \ref{prop:semi} can be extended to any
geometrically connected subsemigroup scheme $S \subset \End(X)$, 
not necessarily having a $k$-rational point. Specifically, $E(S)$
is a nonempty, geometrically connected subsemigroup scheme, with
semigroup law given by $f_1 f_2 = f_1$. Moreover, there exists
a unique retraction of semigroup schemes 
\[ \pi : S \longrightarrow E(S); \]
it assigns to any point $f \in S$, the unique idempotent 
$e \in E(S)$ such that $e f = f$. Finally, the above morphism
$\pi$ defines a structure of $E(S)$-monoid scheme on $S$, with
composition law induced by that of $S$, and with neutral section
the inclusion 
\[ \iota : E(S) \longrightarrow S. \] 

In fact, this monoid scheme is a group scheme: consider indeed 
the closed subscheme $T \subset S \times S$ defined in 
set-theoretic notation by
\[ T = \{ (x,y) \in S \times S ~\vert~ 
xy = yx, \; x^2 y = x, \; xy^2 = y \}, \]
and the morphism 
\[ \rho: T \longrightarrow S, \quad (x,y) \longmapsto xy. \]
Then one may check that $\rho$ is a retraction from $T$ to
$E(S)$, with section 
\[ \varepsilon : E(S) \longrightarrow T, \quad 
x \longmapsto (x,x). \]
Moreover, $T$ is a group scheme over $E(S)$ via $\rho$, 
with composition law given by $(x,y) (x',y') := (xx',y'y)$,
neutral section $\varepsilon$, and inverse given by 
$(x,y)^{-1} := (y,x)$. Also, the first projection 
\[ p_1 : T \longrightarrow S, \quad (x,y) \longrightarrow x \]
is an isomorphism which identifies $\rho$ with $\pi$;
furthermore, $p_1$ is an isomorphism of monoid schemes. 
This yields the desired group scheme structure. 

When $k$ is algebraically closed, all these assertions are
easily deduced from the structure of $S$ obtained in Proposition 
\ref{prop:semi}; the case of an arbitrary field follows by descent.
\end{remark}

\subsection{Proof of Proposition \ref{prop:bound}}
\label{subsec:bound}

(i) can be deduced from the results of \cite{Brion-Renner}; 
we provide a self-contained proof by adapting some of 
the arguments from [loc.~cit.].

As in the beginning of the proof of Proposition 
\ref{prop:semi}, we denote by $\langle f \rangle$ the 
smallest closed subscheme of $\End(X)$ containing all 
the powers $f^n$, where $n \geq 1$. In view of the 
boundedness assumption, $\langle f \rangle$ is an algebraic
subsemigroup; clearly, it is also commutative. 
The subsemigroups $\langle f^m \rangle$, where $m \geq 1$, 
form a family of closed subschemes 
of $\langle f \rangle$; hence there exists a minimal 
such subsemigroup, $\langle f^{n_0} \rangle$. Since 
$\langle f^m \rangle \cap \langle f^n \rangle
\supset \langle f^{mn} \rangle$, we see that
$\langle f^{n_0} \rangle$ is the smallest such
subsemigroup. 

The connected components of $\langle f^{n_0} \rangle$ form 
a finite set $F$, equipped with a semigroup structure such 
that the natural map $\varphi : \langle f^{n_0} \rangle \to F$ 
is a homomorphism of semigroups. In particular, the finite
semigroup $F$ is generated by $\varphi(f^{n_0})$.
It follows readily that $F$ has a unique idempotent,
say $\varphi(f^{n_0n})$. Then the fiber 
$\varphi^{-1} \varphi(f^{n_0n})$
is a closed connected subsemigroup of 
$\langle f^{n_0} \rangle$, and contains 
$\langle f^{n_0 n} \rangle$. By the minimality 
assumption, we must have 
$\langle f^{n_0 n} \rangle = \varphi^{-1} \varphi(f^{n_0n})
= \langle f^{n_0} \rangle$. As a consequence,
$\langle f^{n_0} \rangle$ is connected.

Also, recall that $\langle f^{n_0} \rangle$ 
is commutative. In view of Proposition \ref{prop:semi}, 
it follows that this algebraic semigroup is in fact 
a group. In particular, $\langle f^{n_0} \rangle$ contains 
a unique idempotent, say $e$. Therefore, $e$ is also
the unique idempotent of $\langle f \rangle$:
indeed, if $g \in \langle f \rangle$ is idempotent, 
then $g = g^{n_0} \in \langle f^{n_0} \rangle$, 
and hence $g = e$.

Thus, $e \langle f \rangle = \langle e f \rangle$ 
is a closed submonoid of $\langle f \rangle$ 
with neutral element $e$ and no other idempotent. 
In view of \cite[Prop.~2.2.5]{Brion12}, it follows
that $e \langle f \rangle$ is a group, say, $G$.
Moreover, $f^{n_0} = e f^{n_0} \in e \langle f \rangle$, 
and hence $f^n \in G$ for all $n \geq n_0$.     
On the other hand, if $H$ is a closed subgroup
of $\End(X)$ and $n_1$ is a positive integer such that
$f^n \in H$ for all $n \geq n_1$, then $H$ contains 
$\langle f^{n_1} \rangle$, and hence 
$\langle f^{n_0} \rangle$ by minimality. In particular, 
the neutral element of $H$ is $e$. Let $g$ denote 
the inverse of $f^{n_1}$ in $H$; then $H$ contains
$f^{n_1 + 1} g = e f$, and hence $G \subset H$. Thus, 
$G$ satisfies the assertion.

(ii) Assume that $f^n(x) = x$ for some $n \geq 1$ and 
some $x \in X(k)$. Replacing $n$ with a large multiple,
we may assume that $f^n \in G$. Let $Y := e(X)$,
where $e$ is the neutral element of $G$ as above, 
and let $y := e(x)$. Then $Y$ is a closed 
subvariety of $X$, stable by $f$ and hence by $G$;
moreover, $G$ acts on $Y$ by automorphisms. Also,
$y \in Y$ is fixed by $f^n$. Since $f^n = (e f)^n$,
it follows that the $(ef)^m(y)$, where $m \geq 1$, 
form a finite set. As the positive powers of $e f$ are 
dense in $G$, the $G$-orbit of $y$ must be finite. Thus, 
$y$ is fixed by the neutral component $G^o$. In view 
of \cite[Prop.~2.1.6]{BSU}, it follows that $G^o$ 
is linear; hence so is $G$. 

Conversely, if $G$ is linear, then $G^o$ is a connected 
linear commutative algebraic group, and hence 
fixes some point $y \in Y(k)$ by Borel's fixed point 
theorem. Then $y$ is periodic for $f$.

Next, assume that $X$ is normal; then so is $Y$ 
by Lemma \ref{lem:nor} below. In view of 
\cite[Thm.~2]{Brion10}, it follows that
$G^o$ acts on the Albanese variety $A(Y)$ via a 
finite quotient of its own Albanese variety, $A(G^o)$. 
In particular, $G$ is linear if and only if $G^o$ 
acts trivially on $A(Y)$. Also, note that $A(Y)$ 
is isomorphic to a summand of the abelian 
variety $A(X)$: the image of the idempotent $A(e)$ 
induced by $e$. If $G^o$ acts trivially on $A(Y)$, 
then it acts on $A(X)$ via $A(e)$, since $G^o = G^o e$.
Thus, some positive power $f^n$ acts on $A(X)$ via
$A(e)$ as well. Conversely, if $f^n$ acts on $A(X)$ 
via some idempotent $g$, then we may assume that 
$f^n \in G^o$ by taking a large power. Thus, 
$f^n = e f^n = f^n e$ and hence $g = A(e) g = g A(e)$; 
in other words, $g$ acts on $A(X)$ as an idempotent 
of the summand $A(Y)$. On the other hand, $g = A(f^n)$ 
yields an automorphism of $A(Y)$; it follows that 
$g = A(e)$.

\begin{lemma}\label{lem:nor}
Let $X$ be a normal variety, and $r: X \to Y$ a retraction.
Then $Y$ is a normal variety as well.
\end{lemma}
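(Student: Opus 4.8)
The plan is to verify normality of $Y$ pointwise, exploiting that the section $i$ and the retraction $r$ split the local rings at corresponding points. First I would collect the elementary structural facts. Via $i$, $Y$ is a closed subscheme of $X$, hence of finite type over $k$; and since $ri=\id_Y$ the morphism $r$ is surjective, so $Y=r(X)$ is irreducible and $r$ is dominant. For a point $y\in Y$ with image $x:=i(y)\in X$, the maps induced on local rings form $\cO_{Y,y}\xrightarrow{\,r^{\#}\,}\cO_{X,x}\xrightarrow{\,i^{\#}\,}\cO_{Y,y}$ with composite the identity; as $\cO_{X,x}$ is a domain ($X$ being integral) it is reduced, hence so is its retract $\cO_{Y,y}$, and therefore $Y$ is reduced. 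Thus $Y$ is integral, and, running the same argument after base change to $\bar k$, geometrically integral, so $Y$ is a variety; moreover $r$ induces an inclusion of function fields $r^{\#}\colon k(Y)\hookrightarrow k(X)$.

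For normality, fix $y\in Y$, set $x=i(y)$, and take $t\in k(Y)$ integral over $\cO_{Y,y}$. Applying the field embedding $r^{\#}$ to an integral dependence relation shows that $r^{\#}(t)\in k(X)$ is integral over $r^{\#}(\cO_{Y,y})\subseteq\cO_{X,x}$, hence over $\cO_{X,x}$. Since $X$ is normal, $\cO_{X,x}$ is integrally closed in $k(X)=\operatorname{Frac}(\cO_{X,x})$, so $r^{\#}(t)\in\cO_{X,x}$. Now apply the local homomorphism $i^{\#}\colon\cO_{X,x}\to\cO_{Y,y}$: writing $t=a/b$ with $a,b\in\cO_{Y,y}$ and using that $i^{\#}r^{\#}=\id$ on $\cO_{Y,y}$ while $i^{\#}$ is a ring homomorphism on $\cO_{X,x}$, one gets $i^{\#}(r^{\#}(t))\cdot b=a$, whence $t=i^{\#}(r^{\#}(t))\in\cO_{Y,y}$. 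As $y$ was arbitrary, every local ring of $Y$ is integrally closed in $k(Y)$, i.e. $Y$ is normal.

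I do not expect a genuine obstacle here: the only step needing a little care is the identity $t=i^{\#}(r^{\#}(t))$, which makes sense only after one knows $r^{\#}(t)\in\cO_{X,x}$ (so that $i^{\#}$ may be applied to it). Equivalently, and perhaps more transparently, once $r^{\#}(t)$ is regular on an open $U\ni x$, say represented by $s\in\cO_X(U)$, the function $s\circ i=i^{\#}(s)$ is regular on the open neighbourhood $i^{-1}(U)=U\cap Y$ of $y$ and represents $t$ there, so $t\in\cO_{Y,y}$. The argument uses only normality of $X$ together with the splitting coming from $ri=\id_Y$, so no hypothesis beyond the statement is needed.
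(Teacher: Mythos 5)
Your argument is correct, but it runs along a different track than the paper's. The paper's proof is global and very short: it invokes the universal property of the normalization $\nu : \tilde{Y} \to Y$ to factor $r$ as $\nu\tilde{r}$ (possible because $X$ is normal and $r$ is dominant), observes that the section $i$ of $r$ then yields the section $\tilde{r} i$ of $\nu$, and concludes that the finite morphism $\nu$ between varieties must be an isomorphism; it takes for granted that $Y$ is a variety, which in the paper's setting was already established when discussing idempotents ($Y = e(X)$ is a projective variety when $X$ is). Your proof instead works pointwise with the splitting $i^{\#} r^{\#} = \id$ on local rings: it first extracts reducedness, integrality and geometric integrality of $Y$ from the fact that $\cO_{Y,y}$ is a retract of the domain $\cO_{X,x}$, and then shows each $\cO_{Y,y}$ is integrally closed in $k(Y)$ by pushing an integral element into $\cO_{X,x}$ via $r^{\#}$, using normality of $X$ there, and pulling it back with $i^{\#}$ (your care with the identity $t = i^{\#}(r^{\#}(t))$, via $t=a/b$ or via restriction of a regular representative along $i$, is exactly the right point to check, and it holds). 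What each approach buys: the paper's is a three-line deduction from the universal property of normalization, at the cost of assuming $Y$ is already a variety; yours is more elementary and self-contained, needing no normalization machinery and proving along the way that the retract $Y$ is itself a variety, which is essentially the ring-theoretic statement that a retract of a normal domain is normal, localized at every point.
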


\begin{proof}
\smartqed
Consider the normalization map, $\nu : \tilde{Y} \to Y$. 
By the universal property of $\nu$, there exists 
a unique morphism $\tilde{r}: X \to \tilde{Y}$ such that 
$r = \nu \tilde{r}$. Since $r$ has a section, so has $\nu$. 
As $\tilde{Y}$ is a variety and $\nu$ is finite, it follows 
that $\nu$ is an isomorphism.
\qed
\end{proof}

\begin{remark}\label{rem:mono}
With the notation of the proof of (i), the group $G$ is 
the closure of the subgroup generated by $e f$. Hence $G$ 
is \emph{monothetic} in the sense of \cite{FPS}, which 
obtains a complete description of this class of algebraic
groups. Examples of monothetic algebraic groups include all 
the semiabelian varieties, except when $k$ is the algebraic
closure of a finite field (then the monothetic algebraic
groups are exactly the finite cyclic groups).  
\end{remark}

\medskip

\noindent
{\bf Acknowledgements.} This work was began during a staying 
at Tsinghua University in March 2013. 
I warmly thank the Mathematical Sciences Center for its 
hospitality, and J\'er\'emy Blanc, Corrado De Concini,
St\'ephane Druel, Baohua Fu, Eduard Looijenga, J\"org Winkelmann
and De-Qi Zhang for helpful discussions or e-mail exchanges.

\end{document}